\documentclass{article}
\usepackage[latin1]{inputenc}
\usepackage[english]{babel}
\usepackage[dvips]{graphics}
\usepackage{epsfig}
\usepackage{fullpage}
\usepackage{amsfonts}
\usepackage{multicol}
\usepackage{latexsym,amssymb}     % Pour certains symboles (\Box, ...)
\usepackage{amsmath}
\usepackage{indentfirst}
\usepackage{makeidx}
\usepackage{theorem}
\usepackage{bbold}
\usepackage{psfrag}
\usepackage{enumitem}
%\usepackage{natbib}
%\usepackage{harvard}
%\harvardparenthesis{square}

\newcommand{\rr}{\mathbb{R}}
\newcommand{\ee}{\mathbb{E}}

\newcommand{\nn}{\mathbb{N}}
\newcommand{\pp}{\mathbb{P}}

\newcommand{\ds}{\displaystyle}
\newcommand{\ind}{\mathbb{1}}

\def\BS{Black \& Scholes }

\newcommand{\blanc}[1]{\vspace{#1\baselineskip}}

\newtheorem{nt_theorem}{Theorem}%[section]
\newenvironment{theorem}{\blanc{1.5}\begin{nt_theorem}---}{\end{nt_theorem}\blanc{1.5}}

\newtheorem{nt_proposition}[nt_theorem]{Proposition}
\newenvironment{proposition}{\blanc{1.5}\begin{nt_proposition}---}{\end{nt_proposition}\blanc{1.5}}

\newtheorem{nt_corollaire}[nt_theorem]{Corollary}
\newenvironment{Corollary}{\blanc{1.5}\begin{nt_corollaire}---}{\end{nt_corollaire}\blanc{1.5}}

\newtheorem{nt_definition}[nt_theorem]{Definition}

\newtheorem{nt_lemma}[nt_theorem]{Lemma}
\newenvironment{lemma}{\blanc{0.5}\begin{nt_lemma}---}{\end{nt_lemma}\blanc{0.5}}

\newtheorem{nt_jeulemma}[nt_theorem]{Jeulin's Lemma}

\newtheorem{nt_conjecture}[nt_theorem]{Conjecture}

\newtheorem{nt_remark}[nt_theorem]{Remark}
\newenvironment{remark}{\blanc{0.2}\begin{nt_remark}---}{\end{nt_remark}\blanc{0.2}}

\newenvironment{proof}{{\textit{Proof : }}}{\hfill$\Box$
\\\bigskip}

\newenvironment{proofOFT}{{\textit{Proof of the theorem : }}}{\hfill$\Box$
\\\bigskip}

\makeatletter
\newcounter{hypo}
%\numberwithin{hypo}{chapter}
\newcommand*{\dohypo}{\textbf{(${\mathcal H}$\thehypo)}}
\newenvironment{hypo}[1][]{%
 
 \refstepcounter{hypo}
 \list{}{%
   \settowidth{\labelwidth}{\dohypo}%
   \setlength{\labelsep}{10pt}%
   \setlength{\leftmargin}{\labelwidth}
   \advance\leftmargin\labelsep%
 }%
\item[\dohypo  #1]%
}{%
 \endlist
}

\def\hypref#1{\hyperref[hyp:#1]{(${\mathcal H}$\ref*{hyp:#1})}}
\def\hypreff#1#2{\hyperref[hyp:#2]{(${\mathcal H}$\ref*{hyp:#1}-{\it
\ref*{hyp:#2})}}}

\makeatother

\begin{document}

\vspace{5mm}

\begin{center}
\textbf{\Large{High order discretization schemes for stochastic volatility models}}\\
\end{center}

$\,$

\begin{center}
\emph{Benjamin Jourdain} and \emph{Mohamed Sbai}\footnote{Université
Paris-Est, CERMICS, Projet MathFi ENPC-INRIA-UMLV. This research
benefited from the support of the

``Chaire Risques Financiers'', Fondation du Risque and the ANR
program BigMC.

Postal address : 6-8 av. Blaise Pascal, Cité Descartes,
Champs-sur-Marne, 77455 Marne-la-Vallée Cedex 2.

E-mails : \underline{jourdain@cermics.enpc.fr} and
\underline{sbai@cermics.enpc.fr} }
\end{center}

\vspace{5mm}

\begin{abstract}
In typical stochastic volatility models, the process driving the
volatility of the asset price evolves according to an autonomous
one-dimensional stochastic differential equation. We assume that the
coefficients of this equation are smooth. Using Itô's formula, we
get rid, in the asset price dynamics, of the stochastic integral
with respect to the Brownian motion driving this SDE. Taking
advantage of this structure, we propose
\begin{itemize}
\item[-] a scheme, based on the Milstein discretization of this SDE, which converges with order
one to the asset price dynamics for an appropriate notion of convergence that
we call weak trajectorial convergence,
\item[-] a scheme, based on the Ninomiya-Victoir discretization of this SDE,
with order two of weak convergence to the asset price.
\end{itemize}
We also propose a specific scheme with improved convergence properties when the
volatility of the asset price is driven by an Ornstein-Uhlenbeck process. We
confirm the theoretical rates of convergence by numerical experiments and show
that our schemes are well adapted to the multilevel Monte Carlo method
introduced by Giles (\emph{Multilevel Monte Carlo path simulation}. Operations
Research, 56:607-617, 2008).
\end{abstract}

\vspace{5mm}

\section*{Introduction}

There exists an extensive literature on numerical integration schemes for
stochastic differential equations. To start with, we mention, among many
others, the work of Talay and Tubaro \cite{TalayTubaro} who first established
an expansion of the weak error of the Euler scheme for polynomially growing
functions allowing for the use of Romberg extrapolation. Bally and Talay
\cite{BallyTalay1} extended this result to bounded measurable functions and
Guyon \cite{Guyon} extended it to tempered stable distributions. More recently,
many discretization schemes of higher weak convergence order have appeared in
the literature. Among others, we cite the work of Kusuoka
\cite{Kusuoka1,Kusuoka2}, the Ninomiya and Victoir \cite{NinomiyaVictoir}
scheme which we will use hereafter, the Ninomiya and Ninomiya
\cite{NinomiyaNinomiya} scheme
and the scheme based on cubature on Wiener spaces of Lyons and Victoir \cite{LyonsVictoir}.\\
Concerning strong approximation, the Milstein scheme has order one of strong
convergence. Unfortunately, it involves the simulation of iterated Brownian
integrals unless a restrictive commutativity condition is satisfied. Under
ellipticity, Cruzeiro \textit{et al.} \cite{CruzeiroMalliavinThalmaier} have
recently proposed a discretization scheme which gets rid of these iterated
integrals and has nice strong convergence properties. More precisely, for each
number of time steps, there exists a Brownian motion different from the one
giving the Brownian increments involved in the scheme such that the strong
error between the scheme and the stochastic differential equation driven by
this new Brownian motion is of order one. We call such a property weak
trajectorial convergence of order one. Weak trajectorial error estimation is
exactly what is needed to control the discretization bias for the computation
of path dependent option prices.

Stochastic volatility models, which have now become a standard in
the market, are an eloquent example of the use of stochastic
differential equations in finance. In our study, we will consider
the following specification which nests many current
stochastic volatility models :
\begin{equation}
\label{SV1}
\left\{\begin{array}{rcl} dS_t&=&rS_tdt+f(Y_t)S_t\left(\rho dW_t+\sqrt{1-\rho^2}dB_t\right) ;\quad S_0=s_0>0\\[1mm]
dY_t&=&b(Y_t)dt+\sigma(Y_t)dW_t;\quad Y_0=y_0,
\end{array}\right.
\end{equation}
where $(S_t)_{t\in[0,T]}$ is the asset price, $r$ the instantaneous interest
rate, $(B_t)_{t\in[0,T]}$ and $(W_t)_{t\in[0,T]}$ are independent standard
one-dimensional Brownian motions, $\rho\in[-1,1]$ is the correlation between
the Brownian motions respectively driving the asset price and the process
$(Y_t)_{t\in[0,T]}$  which solves a one-dimensional autonomous stochastic
differential equation. The volatility process is $(f(Y_t))_{t\in[0,T]}$ where
the transformation function $f$ is usually taken positive and strictly
monotonic in order to ensure that the effective correlation between the stock
price and the volatility keeps the same sign (the function $\sigma$ usually
takes nonnegative values). In the literature, the development of specific
discretization schemes for stochastic volatility models has only received
little attention. We mention nevertheless the work of Kahl and Jäckel
\cite{KahlJackel} who discussed various numerical integration methods and
proposed a simple scheme with order $1/2$ of strong convergence like the
standard Euler scheme but with a smaller multiplicative constant. Also the
numerical integration of the CIR process and of the Heston model received a
particular attention because of the inadequacy of the Euler scheme due to the
fact that both $f$ and $\sigma$ are equal to the square root function (see for
example Deelstra and Delbaen \cite{DeelstraDelbaen}, Alfonsi \cite{Alfonsi},
Kahl and Schurz \cite{KahlSchurz}, Andersen \cite{Andersen1}, Berkaoui
\textit{et al.} \cite{BossyDiop}, Ninomiya and Victoir \cite{NinomiyaVictoir},
Lord \textit{et al.} \cite{LordKoekoekVanDijk}, Alfonsi \cite{Alfonsi2}). An
exact simulation technique for the Heston model was also proposed by Broadie
and Kaya \cite{BroadieKaya}.

In the present paper, we assume that the functions $f$, $\sigma$ and
$b$ are smooth which means that we do not deal with the Heston model
where $f(y)=\sqrt{y}, \,b(y)=\kappa(\theta-y)$ and
$\sigma(y)=\nu\sqrt{y}$. As an example of stochastic volatility
models that fall within the scope of our study, let us mention

\begin{itemize}
\item Hull\&White \cite{HullWhite} model ($\rho=0$) and Wiggins
    \cite{Wiggins} ($\rho \neq 0$)
\[\left\{\begin{array}{rcl} dS_t&=&rS_tdt+\sqrt{Y_t}S_t\left(\rho dW_t+\sqrt{1-\rho^2}dB_t\right)\\[1mm]
dY_t&=&\mu Y_t dt+ \zeta Y_t dW_t
\end{array}\right.\]
which can be expressed as (\ref{SV1}) with $f(y)=\sqrt{y},
\,b(y)=\mu y$ and  $\sigma(y)=\zeta y$. Note that it can also be
seen as (\ref{SV1}) with $f(y)=e^y,
\,b(y)=\frac{\mu}{2}-\frac{\zeta^2}{4}$ and
$\sigma(y)=\frac{\zeta}{2}$.
\item Scott's model \cite{Scott} which generalizes the Hull\&White model
\begin{equation}
\label{Scott}\begin{array}{c}
\ds \left\{\begin{array}{rcl} dS_t&=&rS_tdt+\sigma_0e^{Y_t}S_t\left(\rho dW_t+\sqrt{1-\rho^2}dB_t\right)\\[1mm]
dY_t&=&\kappa(\theta-Y_t) dt+ \nu dW_t
\end{array}\right.\\[5mm]
\ds \Rightarrow f(y)=\sigma_0e^y, \,b(y)=\kappa(\theta-y) \text{ and
} \sigma(y)=\nu.
\end{array}
\end{equation}
\item Stein\&Stein model \cite{SteinStein}
\[\begin{array}{c}
\ds \left\{\begin{array}{rcl} dS_t&=&rS_tdt+{Y_t}S_t\left(\rho dW_t+\sqrt{1-\rho^2}dB_t\right)\\[1mm]
dY_t&=&\kappa(\theta-Y_t) dt+ \nu dW_t
\end{array}\right.\\[5mm]
\ds \Rightarrow f(y)=y, \,b(y)=\kappa(\theta-y) \text{ and }
\sigma(y)=\nu.\end{array}\]
\item Quadratic Gaussian model
\[\begin{array}{c}
\ds\left\{\begin{array}{rcl} dS_t&=&rS_tdt+Y_t^2S_t\left(\rho dW_t+\sqrt{1-\rho^2}dB_t\right)\\[1mm]
dY_t&=&\kappa(\theta-Y_t) dt+ \nu dW_t
\end{array}\right.\\[5mm]
\ds \Rightarrow f(y)=y^2, \,b(y)=\kappa(\theta-y) \text{ and }
\sigma(y)=\nu.\end{array}\]
\end{itemize}

Our aim is to take advantage of the structure of \eqref{SV1} to
construct and analyse simple and robust ad hoc discretization
schemes which have nice convergence properties. For a start, we make
a logarithmic change of variables for the asset : the
two-dimensional process
$(X_t:=\log\left(S_t\right),Y_t)_{t\in[0,T]}$ solves the following
SDE
\begin{equation}
\label{SV2} \left\{\begin{array}{rcl} \ds dX_t&=&\ds
(r-\frac{1}{2}f^2(Y_t))dt+f(Y_t)\left(\rho
dW_t+\sqrt{1-\rho^2}dB_t\right);\quad X_0=\log(s_0).\\[1mm]
\ds dY_t&=&\ds b(Y_t)dt+\sigma(Y_t) dW_t;\quad Y_0=y_0.
\end{array}\right.\end{equation}

Our main idea is to get rid in the first equality of the stochastic
integral involving the common Brownian motion $(W_t)_{t\in[0,T]}$.
In all what follows, we assume that

$\,$\\
$\left(\mathcal{A}\right) \quad f$ and $\sigma$ are $\mathcal{C}^1$
functions and $\sigma
>0$.

$\,$\\
One can then define the primitive $F(y)=\int_0^y \frac{f}{\sigma}(z)
dz$ and apply Itô's formula to get
\[dF(Y_t)=\frac{f}{\sigma}(Y_t)dY_t+\frac{1}{2}(\sigma f'-f\sigma')(Y_t)dt.\]
Therefore $(X_t,Y_t)_{t\in[0,T]}$ solves
\begin{equation}
\label{SV3} \left\{\begin{array}{rcl} dX_t&=&\rho dF(Y_t)+h(Y_t)
dt+\sqrt{1-\rho^2}f(Y_t)dB_t\\[1mm]
dY_t&=&b(Y_t)dt+\sigma(Y_t)dW_t
\end{array}\right.,
\end{equation}
where $h:y\mapsto
r-\frac{1}{2}f^2(y)-\rho(\frac{b}{\sigma}f+\frac{1}{2}(\sigma
f'-f\sigma'))(y)$. We discretize the autonomous SDE satisfied by $Y$
using a scheme with high order of strong or weak convergence
depending on whether one is interested in path-dependent or vanilla
options. Then, in the dynamics of $X$, we only need to discretize
the standard integral $\int_0^T h(Y_s)ds$ and the stochastic
integral $\int_0^T f(Y_t)dB_t$ where $(Y_t)_{t\in[0,T]}$ and
$(B_t)_{t\in[0,T]}$ are independent.

We recall that weak convergence is the right notion to analyse
the discretization bias for plain vanilla options whereas weak
trajectorial convergence permits to deal with path-dependent
options. The first section of the paper is devoted to path-dependent
options. Combining the Milstein discretization of the
one-dimensional SDE satisfied by $(Y_t)_{t\in[0,T]}$ with an
appropriate discretization of the integral $\int_0^T f(Y_t)dB_t$
based on the independence of $(Y_t)_{t\in[0,T]}$ and
$(B_t)_{t\in[0,T]}$, we obtain a scheme with order one of weak
trajectorial convergence under several assumptions, the most
restrictive one being that $f^2$ is bounded away from 0. When
$(Y_t)_{t\in[0,T]}$ follows an Ornstein-Uhlenbeck process, which is
the case for all the models cited above, the order one is preserved
when replacing the Milstein discretization with exact simulation.
Unfortunately, the assumption $f^2$ bounded away from 0 is not
satisfied by the stochastic volatility models cited above. For Scott
and Hull \& White models, where $f$ is positive, we manage to prove
that the order one is preserved. For the quadratic Gaussian model,
taking advantage of the flatness of $f(y)=y^2$ around the origin where this function vanishes,
we are able to prove that the order of convergence is $1-\varepsilon$ for any $\varepsilon>0$.
Finally, in the Stein \& Stein model, the fact that the derivative of $f(y)=y$ does not vanish where this function is zero weakens
the  order of convergence : the order $\frac{3}{4}-\varepsilon$ in $L^2$ obtained by our theoretical analysis is confirmed by our numerical experiments.

In the second section, using the Ninomiya-Victoir discretization of the SDE
satisfied by $(Y_t)_{t\in[0,T]}$, we construct a scheme with order two of weak
convergence. Since the SDE satisfied by $Y$ is one-dimensional, the
Ninomiya-Victoir scheme only involves two one-dimensional ODEs whose solutions
are available in closed form. The last section is devoted to numerical
experiments which confirm the theoretical rates of convergence. We also compare
the time needed by the different schemes to achieve a given precision in the
multilevel Monte Carlo computation of a plain vanilla Call option and a
lookback option. The multilevel Monte Carlo method proposed recently by Giles
\cite{Giles1} automatically balances the bias and the statistical error and
optimally takes advantage of both the weak and the strong convergence
properties of the schemes to accelerate the computation. Somehow surprisingly,
the strong convergence order has a dominating effect on its efficiency. We are
able to exhibit an explicit coupling with order one of convergence between our
weak trajectorial schemes with $N$ and $2N$ steps (see Remark 5). With this
coupling, the multilevel Monte Carlo estimator behaves as if the scheme had
order one of strong convergence and the computation time needed to achieve the
root-mean-square error $\epsilon>0$ is ${\cal O}(\epsilon^{-2})$ (see
\cite{Giles1}). For high levels of precision our schemes turn out to be more
efficient than the Euler and the Kahl-Jäckel schemes for both the vanilla Call
and the lookback option. The reason is that their better convergence properties
compensate for the increase of computation effort at each step.

\subsubsection*{Notations}We will consider, for a number of
time steps $N \geq 1$, the uniform subdivision
$\prod_N=\{0=t_0<t_1<\dots<t_N=T\}$ of $[0,T]$ with the
discretization step $\delta_N=\frac{T}{N}$.\\
We denote by $\underline{\psi}$ the greatest lower bound of the
function $\psi:y\mapsto f^2(y)$ and by $\overline{\psi}$ its lowest
upper bound. We also introduce the following notation :
\[\widehat{\psi}(y)=\left\{\begin{array}{ll}\frac{3}{2}
f^2(y)&\text{ if } \overline{\psi}=\infty\\[3mm]
\overline{\psi}&\text{ otherwise.}\end{array}\right.\]

\section{An efficient scheme for path dependent options pricing}
Building a first order strong convergence scheme for a two dimensional SDE is
not an obvious task. Even the ad hoc schemes provided by Kahl and Jäckel
\cite{KahlJackel} exhibit a strong convergence of order $\frac{1}{2}$.

Actually, the natural candidate for this purpose is the Milstein
scheme. Unfortunately, the commutativity condition which permits to
implement it amounts to $\sigma f'=0$ in our setting. This condition
is typically true when either $f$ is constant or $\sigma=0$. Both
cases are of no practical interest since they lead to a
deterministic volatility.

However, since the inherent Brownian motion is not essential for applications
in finance, the usual strong convergence criterion is not crucial for
estimating the error of a scheme in pricing a path dependent option. What is
more relevant is the approximation in law of the whole trajectory of the
process considered for instance by Cruzeiro \textit{et al.}
\cite{CruzeiroMalliavinThalmaier}. Using an ingenious rotation of the Brownian
motion, these authors have constructed a discretization scheme allowing for a
weak convergence on the whole trajectory of order one which avoids the
simulation of the iterated stochastic integrals.

For the SDE (\ref{SV2}), the discretization scheme of Cruzeiro,
Malliavin and Thalmaier writes as
\begin{equation}
\label{CMT}
\begin{array}{l}
X^{CMT}_{t_{k+1}}=X^{CMT}_{t_k}+\left(r-\frac{f^2(Y^{CMT}_{t_k})}{2}\right)\delta_N+\rho
f(Y^{CMT}_{t_k})\Delta W_{k+1}+\frac{\rho}{2}\sigma
f'(Y^{CMT}_{t_k})\Delta
W_{k+1}^2\\[3mm]
\quad \quad +\sqrt{1-\rho^2}\sigma f'(Y^{CMT}_{t_k})\Delta
W_{k+1}\Delta B_{k+1}+\sqrt{1-\rho^2}f(Y^{CMT}_{t_k})\Delta
B_{k+1}-\frac{\rho}{2}\sigma f'(Y^{CMT}_{t_k})\Delta
B_{k+1}^2\\[5mm]
Y^{CMT}_{t_{k+1}}=Y^{CMT}_{t_k}+\left(b(Y^{CMT}_{t_k})+\frac{1}{2}(\frac{\sigma^2f'}{f}-\sigma\sigma')(Y^{CMT}_{t_k})\right)
\delta_N+\sigma(Y^{CMT}_{t_k})\Delta W_{k+1}\\[3mm]
\quad \quad+\frac{1}{2}\sigma\sigma'(Y^{CMT}_{t_k})\Delta
W_{k+1}^2-\frac{\sigma^2f'}{2f}\Delta B_{k+1}^2
\end{array}
\end{equation}
where $\Delta W_{{k+1}}=W_{t_{k+1}}-W_{t_k}$ and $\Delta
B_{k+1}=B_{t_{k+1}}-B_{t_k}$ correspond to the Brownian increments.

We set out to construct a much simpler scheme having the same order
of weak trajectorial convergence by taking advantage of the
particular structure of the SDE defining stochastic volatility
models. We first begin with the general case of any process
$(Y_t)_{t\in[0,T]}$ driving the volatility and then consider the
case of an Ornstein-Uhlenbeck process where we obtain more precise
results.

\vspace{2mm}

\subsection{General case}

%To start with, consider the Milstein scheme for $Y$ :

A discretization scheme will naturally involve the Brownian
increments. Thanks to the independence between $(Y_t)_{t\in[0,T]}$
and $(B_t)_{t\in[0,T]}$, we can construct a vector
$(\widetilde{X}_{t_0},\dots,\widetilde{X}_{t_N})$ using only
$(\Delta B_{1},\dots,\Delta B_N)$ and $(Y_t)_{t\in[0,T]}$, which has
exactly the same law as $(X_{t_0},\dots,X_{t_N})$ :

\begin{lemma}
$\ds \forall 0\leq l < N,$ let $v_l=\frac{1}{\delta_{N}}
\int_{t_{l}}^{t_{l+1}} \!\!\psi(Y_{s})ds$.  The vector
$(\widetilde{X}_{t_0},\dots,\widetilde{X}_{t_N})$ defined by
 \[\begin{array}{l}
   \ds \widetilde{X}_{t_0}=X_{t_{0}} \\
\ds \forall 1\leq k\leq N,
\widetilde{X}_{t_{k}}=\widetilde{X}_{t_0}+\rho(F(Y_{t_{k}})-F(Y_{t_{0}}))
 +\int_{{0}}^{t_{k}} \!\!h(Y_{s})ds+\sqrt{1-\rho^{2}}\sum_{l=0}^{k-1} \sqrt{v_l} \,\Delta
 B_{l+1}
 \end{array}\]
has the same law as $(X_{t_0},\dots,X_{t_N})$.
\end{lemma}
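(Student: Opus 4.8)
The plan is to exploit the conditional independence structure: conditionally on the whole path $(Y_t)_{t\in[0,T]}$, the stochastic integral $\int_0^T f(Y_t)\,dB_t$ is a Gaussian process, so matching the conditional law of the increments $\bigl(\int_{t_l}^{t_{l+1}} f(Y_s)\,dB_s\bigr)_{0\le l<N}$ reduces to matching conditional means and covariances. From \eqref{SV3} we have, for $1\le k\le N$,
\[
X_{t_k}=X_{t_0}+\rho\bigl(F(Y_{t_k})-F(Y_{t_0})\bigr)+\int_0^{t_k} h(Y_s)\,ds+\sqrt{1-\rho^2}\sum_{l=0}^{k-1}\int_{t_l}^{t_{l+1}} f(Y_s)\,dB_s,
\]
so $(X_{t_0},\dots,X_{t_N})$ and $(\widetilde X_{t_0},\dots,\widetilde X_{t_N})$ differ only in that the terms $\int_{t_l}^{t_{l+1}} f(Y_s)\,dB_s$ are replaced by $\sqrt{v_l}\,\Delta B_{l+1}$. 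Since both vectors are measurable functions of $(Y_t)_{t\in[0,T]}$ and of a family of random variables independent of $(Y_t)_{t\in[0,T]}$ — namely $(\Delta B_1,\dots,\Delta B_N)$ on one side, and on the other side the increments of $B$ which, although $B$ drives nothing in the $Y$-equation, are independent of $(W_t)$ and hence of $(Y_t)$ — it suffices to show that, conditionally on $(Y_t)_{t\in[0,T]}$, the two random vectors $\bigl(\int_{t_l}^{t_{l+1}} f(Y_s)\,dB_s\bigr)_{0\le l<N}$ and $\bigl(\sqrt{v_l}\,\Delta B_{l+1}\bigr)_{0\le l<N}$ have the same law.

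First I would justify the conditioning rigorously: because $(Y_t)_{t\in[0,T]}$ is adapted to the filtration generated by $W$ alone and $B$ is independent of $W$, the process $B$ remains a Brownian motion independent of the $\sigma$-field $\sigma(Y_t,t\in[0,T])$; one may then work on the regular conditional probability given $(Y_t)_{t\in[0,T]}$, under which $f(Y_s)$ is a deterministic (square-integrable, by continuity of $f$ and of $t\mapsto Y_t$ on the compact $[0,T]$) integrand and $B$ is still Brownian. Under this conditional law, $\int_{t_l}^{t_{l+1}} f(Y_s)\,dB_s$ is a centered Gaussian variable; the Wiener integrals over disjoint intervals are independent; and its variance is $\int_{t_l}^{t_{l+1}} f^2(Y_s)\,ds=\delta_N v_l$. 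On the other side, $\sqrt{v_l}\,\Delta B_{l+1}$ is centered Gaussian with variance $v_l\,\delta_N$, and the $\Delta B_{l+1}$ over disjoint intervals are independent. Hence the two conditional laws coincide — both are products of $N(0,\delta_N v_l)$ laws. Integrating over the law of $(Y_t)_{t\in[0,T]}$ gives equality of the unconditional joint laws of $\bigl(Y_{t_0},\dots,Y_{t_N},(\int_{t_l}^{t_{l+1}} f(Y_s)dB_s)_l\bigr)$ and $\bigl(Y_{t_0},\dots,Y_{t_N},(\sqrt{v_l}\Delta B_{l+1})_l\bigr)$, and since $(X_{t_0},\dots,X_{t_N})$, resp. $(\widetilde X_{t_0},\dots,\widetilde X_{t_N})$, is the image of these vectors under the same measurable map (the $F(Y_{t_k})$ and $\int_0^{t_k} h(Y_s)ds$ terms depending only on $(Y_t)$), the conclusion follows.

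The only mildly delicate point — not really an obstacle — is the claim that $B$ is a Brownian motion independent of the entire path $\sigma$-field $\sigma(Y_t,\,t\in[0,T])$ rather than merely independent of $W$ at fixed times; this is immediate from the independence of the two Brownian motions and the fact that $Y$ solves an SDE driven by $W$ only, but it is worth stating explicitly so that the passage to the regular conditional probability is legitimate. Everything else is a routine identification of Gaussian laws via means and covariances.
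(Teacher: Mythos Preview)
Your proposal is correct and follows exactly the same approach as the paper: condition on $(Y_t)_{t\in[0,T]}$, observe that both vectors are then Gaussian, and match means and covariances. The paper's own proof consists of a single sentence to this effect; your version simply spells out the details (the independence of $B$ from $\sigma(Y_t,\,t\in[0,T])$, the identification of the conditional variances as $\delta_N v_l$, and the passage from conditional to unconditional equality in law), but there is no substantive difference in method.
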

\begin{proof}
 The proof is elementary. Conditionally on $Y$, the two vectors are Gaussian vectors
 with the same mean and covariance matrix.
\end{proof}

In order to approximate $(\widetilde{X}_{t_k})_{0\leq k\leq N}$, one
needs to discretize $v_k$ for $k \in \{0,\dots,N-1\}$. If
$(v_k^N)_{0\leq k\leq N-1}$ is an approximation of $(v_k)_{0\leq
k\leq N-1}$, then by Doob's inequality
\[\begin{array}{rcl}
\ds \ee\left[\sup_{0\leq k\leq N-1} \left(\sum_{l=0}^k
\left(\sqrt{v_l}-\sqrt{v_l^N}\right)\, \Delta
B_{l+1}\right)^2\right] &\leq&\ds 4 \delta_N \sum_{k=0}^{N-1}
\ee\left[\left(\sqrt{v_l}-\sqrt{v_l^N}\right)^2\right]\\[2mm]
&\leq&\ds \frac{1}{\underline{\psi}} \,\delta_N
\sum_{k=0}^{N-1} \ee\left[\left(v_l-v_l^N\right)^2\right]\\
\end{array}\]
as soon as $\underline{\psi}=\inf_x \psi(x)$ is assumed to be
positive and, $\forall 0\leq k\leq N-1, v_k^N$ is greater than
$\underline{\psi}$. Consequently, to obtain a scheme with order one
of strong convergence for $(\widetilde{X}_{t_k})_{0\leq k\leq N}$,
one needs that $\forall 0\leq k\leq N-1,
\ee\left[\left(v_k-v_k^N\right)^2\right] =
\mathcal{O}\left(\frac{1}{N^2}\right)$. According to the treatment
of the term $\overline{I}^j_2$ defined by (\ref{termIj2}) in the
proof of the Theorem \ref{THfort} below, one has $\forall 0\leq
k\leq N-1,$
\begin{equation}
\ee\left[\left(v_k-\left(\psi(Y_{t_k})+\frac{\sigma
\psi'(Y_{t_k})}{\delta_N}\int_{t_k}^{t_{k+1}}
(W_s-W_{t_k})ds\right)\right)^2\right]=\mathcal{O}\left(\frac{1}{N^2}\right).
\end{equation}
This equality still holds true when replacing $Y$ by a scheme with
order one of strong convergence in the term with sign minus of the
left hand side. Better still,
$\left(F(Y_{t_k})+\int_0^{t_k}h(Y_s)ds\right)_{0\leq k\leq N}$ is
approximated with strong order one when replacing $Y$ by such a
scheme and using a rectangular discretization for the integral in
time.

\vspace{2mm}

For all these reasons, we choose the Milstein scheme for $Y$ :
\[
\forall 0\leq k \leq N-1, \,
\widetilde{Y}^N_{t_{k+1}}=\widetilde{Y}^N_{t_k}+b(\widetilde{Y}^N_{t_k})\delta_N+\sigma(\widetilde{Y}^N_{t_k})
\Delta W_{k+1}+\frac{1}{2}\sigma
\sigma'(\widetilde{Y}^N_{t_k})\left(\Delta
W_{{k+1}}^2-\delta_N\right); \quad \widetilde{Y}^N_{t_0}=y_0.
\]
and we write our scheme as follows

\vspace{2mm}

\framebox{\textbf{WeakTraj\_1 scheme}}
\begin{equation}
\label{strong} \begin{array}{l} \ds
\widetilde{X}_{t_{k+1}}^N=\widetilde{X}_{t_{k}}^N+\rho
\left(F(\widetilde{Y}^N_{t_{k+1}})-F(\widetilde{Y}^N_{t_{k}})\right)+ \delta_N h(\widetilde{Y}^N_{t_k})\\[4mm]
\ds \quad \quad
+\sqrt{1-\rho^2}\sqrt{\left(\psi(\widetilde{Y}^N_{t_k})+\frac{\sigma
\psi'(\widetilde{Y}^N_{t_k})}{\delta_N}\int_{t_k}^{t_{k+1}}
(W_s-W_{t_k})ds\right) \vee \underline{\psi}}\,\, \Delta B_{k+1}.
\end{array}
\end{equation}

Note that in order to implement this scheme, one needs to simulate
both the Brownian increment $\Delta W_{k+1}$ and the random variable
$\int_{t_k}^{t_{k+1}} (W_s-W_{t_k}) ds$. This is straightforward as
one can easily check that

\[\left(\begin{array}{c}
\ds \Delta W_{k+1}\\[1mm]
\ds \int_{t_k}^{t_{k+1}} (W_s-W_{t_k}) ds \end{array}\right) \sim
\mathcal{N}\left(\left(\begin{array}{c}\ds
0\\[1mm]\ds 0\end{array}\right),\left(\begin{array}{cc}
\ds \delta_N&\ds \delta_N^2/2 \\[1mm]
\ds \delta_N^2/2 &\ds \delta_N^3/3\end{array}\right)\right).\]

We can now state our first main result :
\begin{theorem}
\label{THfort}Under the following assumptions
\begin{hypo}
\label{fbounded} $f$ and $\sigma$ are $\mathcal{C}^3$ functions,
$\frac{f}{\sigma}$ and $ff'$ are bounded
\end{hypo}
\begin{hypo}
\label{spos} $\underline{\psi} >0$
\end{hypo}
\begin{hypo}
 \label{hypTh1}
there exists a constant $K_{1}$ such that, $\forall (x,y) \in
\rr^2,$
\[\begin{array}{l}
 \ds \Big|(bh'+\frac{\sigma^{2}}{2}h'')(y)\Big|
 \leq K_{1} (1+|y|)\\[2mm]
 \ds \big|\sigma h'(y)\big|
 \leq K_{1} (1+|y|)\\[2mm]
\ds \Big|h(y)-h(x)\Big|\leq K_1 |y-x|
 \end{array}\]
\end{hypo}
\begin{hypo}
\label{hypTh2} there exists a constant $K_{2}$ such that, $\forall
(x,y) \in \rr^2,$
\[\begin{array}{l}
 \ds \Big|(b\psi'+\frac{\sigma^{2}}{2}\psi'')(y)\Big|
 \leq K_{2} (1+|y|)\\[2mm]
 \ds \big|\sigma \psi'(y)-\sigma \psi'(x)\big|
 \leq K_{2} |y-x|
 \end{array}\]
\end{hypo}
\begin{hypo}\label{Mil1}
$b$ and $\sigma$ are $\mathcal{C}^2$ functions with bounded first
and second derivatives
\end{hypo}

\begin{hypo}\label{Mil2}
there exists a positive constant $K$ such that $\forall (x,y) \in
\rr^2$
\[|\sigma\sigma'(x)-\sigma\sigma'(y)| \leq K |x-y|\]
\end{hypo}

the WeakTraj\_1 scheme has order one of weak trajectorial
convergence. More precisely, for each $p\geq 1$, there exists a
constant $C$ independent of the number of time steps $N$ such that
\[\ee\left[\max_{0\leq k\leq N}
\Big|\!\Big|\left(\widetilde{X}_{t_k},Y_{t_k}\right)-\left(\widetilde{X}^N_{t_k},\widetilde{Y}^N_{t_k}\right)\Big|\!\Big|^{2p}\right]
\leq \frac{C}{N^{2p}}.\]
\end{theorem}

The proof of the theorem relies on the order one of strong convergence of the
Milstein scheme (see Milstein \cite{Milstein} for the particular case $p=1$) :

\begin{lemma}
\label{MilsteinStrong}Under the assumptions
($\mathcal{H}$\ref{Mil1}) and ($\mathcal{H}$\ref{Mil2}), one has
that, $\forall p\geq1$, there exists a positive constant $C_p$
independent of $N$ such that
\[\ee\left(\max_{0\leq k \leq N}
\left|Y_{t_k}-\widetilde{Y}^N_{t_k}\right|^{2p}\right) \leq C_p
\delta_N^{2p}.\]
\end{lemma}

The proof for general $p$ is postponed to the appendix.

\begin{remark}
Before giving the proof of the theorem, we make a few comments on
its assumptions. ($\mathcal{H}$\ref{fbounded}) implies that $h$ and
$\psi$ are $\mathcal{C}^2$ functions which was implicitly assumed in
($\mathcal{H}$\ref{hypTh1}) and ($\mathcal{H}$\ref{hypTh2}). The
latter assumptions are expressed in a reduced form. One can check
that the following conditions on the coefficients of the original
SDE are sufficient for them to hold :
\begin{itemize}
\item $f$ and $\sigma$ are bounded $\mathcal{C}^4$ functions with
bounded derivatives.
\item $b$ is a bounded $\mathcal{C}^3$ function with
bounded derivatives.
\item $\exists \sigma_0 >0$ such that $\forall y \in \rr, \, \sigma(y) \geq
\sigma_0$.
\end{itemize}
\end{remark}

\begin{proofOFT}
Throughout the proof, we denote by $C$ a constant which can change
from one line to another while always being independent of $N$.
Thanks to Lemma \ref{MilsteinStrong}, we just have to control the
error on $\widetilde{X}$ :
\[\begin{array}{ll}
\ds \!\!\ee\!\!\left[\max_{0\leq k\leq N}
\!\!|\widetilde{X}_{t_k}-\widetilde{X}^N_{t_k}|^{2p}\right]\!\!&\!\!=\ds
\ee\!\!\left[\max_{0\leq k\leq N}
\Big|\rho(F(Y_{t_k})-F(\widetilde{Y}^N_{t_k}))+\sum_{j=0}^{k-1}\left(
\int_{t_{j}}^{t_{j+1}}
 \!\!h(Y_{s})ds-\delta_N
h(\widetilde{Y}^N_{t_j})\right.\right.\\[5mm]
&\ds\!\!\left. +\sqrt{\frac{1-\rho^{2}}{\delta_{N}}
\int_{t_{j}}^{t_{j+1}}\!\!\psi(Y_{s})ds}\,\Delta B_{j+1}\right.\\[4mm]
&\ds
\left.\left.-\sqrt{1-\rho^2}\sqrt{\left(\psi(\widetilde{Y}^N_{t_j})+\frac{\sigma
\psi'(\widetilde{Y}^N_{t_j})}{\delta_{N}}\int_{t_j}^{t_{j+1}}
\!\!(W_s-W_{t_j})ds\right)\vee \underline{\psi}}\,\,\,\Delta B_{j+1}\Big|^{2p}\right)\right]\\[4mm]
&\leq\ds 3^{2p-1}\, (\rho^{2p}I_0+I_{1}+(1-\rho^{2})^pI_{2})
\end{array}\]
where
\[I_0=\ee\left[\max_{0\leq k\leq N} \Big|F(Y_{t_k})-F(\widetilde{Y}^N_{t_k})\Big|^{2p}\right]\]
\[I_{1}=\ee\left[\max_{0\leq k\leq N} \Big|\sum_{j=0}^{k-1} \left( \int_{t_{j}}^{t_{j+1}}
 \!\!h(Y_{s})ds-\delta_N h(\widetilde{Y}^N_{t_j})\right)\Big|^{2p}\right]\]
and
\[\begin{array}{l}
 \ds I_{2}=\ee\left[\max_{0\leq k\leq N}\Big|\sum_{j=0}^{k-1} \left(\sqrt{\frac{1}{\delta_{N}} \int_{t_{j}}^{t_{j+1}}
   \!\!\psi(Y_{s})ds}\,\,-\right.\right.\\[5mm]
\ds \quad \quad \quad \quad  \left.\left.
\sqrt{\left(\psi(\widetilde{Y}^N_{t_j})+\frac{\sigma
\psi'(\widetilde{Y}^N_{t_j})}{\delta_{N}}\int_{t_j}^{t_{j+1}}
\!\!(W_s-W_{t_j})ds\right)\vee \underline{\psi}}\right) \Delta
B_{j+1}\Big|^{2p}\right].
\end{array}
\]

($\mathcal{H}$\ref{fbounded}) yields that $F$ is Lipschitz
continuous so using Lemma \ref{MilsteinStrong} we show that $I_0
\leq \frac{C}{N^{2p}}.$ Next, we have that
\[I_1 \leq C \left(\ee\left[\max_{0\leq k\leq N}\Big|\sum_{j=0}^{k-1} \int_{t_{j}}^{t_{j+1}}
 \!\!h(Y_{s})ds-\delta_N h(Y_{t_j})\Big|^{2p}\right]+\delta_N^{2p}\ee\left[\max_{0\leq k\leq N}\Big|\sum_{j=0}^{k-1}
  h(Y_{t_j})- h(\widetilde{Y}^N_{t_j})\Big|^{2p}\right]\right).\]
On one hand, thanks to assumption ($\mathcal{H}$\ref{hypTh1}) and
Lemma \ref{MilsteinStrong},
\[\delta_N^{2p} \ee\left[\max_{0\leq k\leq N}
\Big|\sum_{j=0}^{k-1}
  h(Y_{t_j})- h(\widetilde{Y}^N_{t_j})\Big|^{2p}\right] \leq
  C \delta_N \sum_{j=0}^{N-1}\ee\left[\Big|h(Y_{t_j})-
  h(\widetilde{Y}^N_{t_j})\Big|^{2p}\right]\leq\frac{C}{N^{2p}}.
  \]
On the other hand, using an integration by parts formula,

\[\!\!\begin{array}{rcl}
 \ds \overline{I}_{1}\!\!& \!\!:=\!\! &\!\! \ds \ee\left[\max_{0\leq k\leq N} \Big|\sum_{j=0}^{k-1} \int_{t_{j}}^{t_{j+1}}
 h(Y_{s})-h(Y_{t_j}) ds\Big|^{2p}\right]\\[5mm]
 \!\!&\!\!=\!\!&\!\!\ds  \ee\left[\max_{0\leq k\leq N} \Big|\sum_{j=0}^{k-1}
 \int_{t_{j}}^{t_{j+1}}
 (t_{j+1}-s)\left((bh'+\frac{\sigma^2h''}{2})(Y_s)ds+\sigma h'(Y_s)dW_s\right) \Big|^{2p}\right]\\[5mm]
 \!\!&\!\!\leq\!\!&\!\!\ds 2^{2p-1}\left(\ee\left[\max_{0\leq k\leq N} \Big| \int_{0}^{t_{k}}\!\!
 (\tau_s-s)(bh'+\frac{\sigma^2h''}{2})(Y_s)ds \Big|^{2p}\right]\!\!+\!\ee\left[\max_{0\leq k\leq N} \Big| \int_{0}^{t_{k}}\!\!
 (\tau_s-s)\sigma h'(Y_s)dW_s \Big|^{2p}\right]\right)\\[5mm]
 \end{array}
\]
where we denoted by $\tau_s$ the lowest discretization point greater
than $s$ : $\tau_s = \lceil \frac{s}{\delta_N} \rceil \delta_N$.
Using Jensen's inequality for the first integral and the
Burkholder-Davis-Gundy inequality for the second, we obtain
\[\begin{array}{rcl}
 \ds \overline{I}_{1}& \leq&\ds C\left(\ee\left[\max_{0\leq k\leq N} t_k^{2p-1} \int_{0}^{t_{k}}
 (\tau_s-s)^{2p}\left|(bh'+\frac{\sigma^2h''}{2})(Y_s)\right|^{2p}ds \right]\right.\\[5mm]
 &&\ds \quad \quad \quad \quad \left.+\ee\left[\left(\int_{0}^{T}
 (\tau_s-s)^2 \left|\sigma h'(Y_s)\right|^2ds\right)^p \right]\right)\\[5mm]
 &\leq&\ds \frac{C}{N^{2p}} \int_0^T
 \ee\left[\left|(bh'+\frac{\sigma^2h''}{2})(Y_s)\right|^{2p}+\left|\sigma
 h'(Y_s)\right|^{2p}\right]ds.
\end{array}\]
Under the assumptions of Lemma \ref{MilsteinStrong}, $\sup_{0\leq t\leq T}
\ee(|Y_s|^{2p})<\infty$ (see Problem 3.15 p.~306 of Karatzas and Shreve
\cite{KaratzasShreve} for example) so, with the help of assumption
($\mathcal{H}$\ref{hypTh1}), we conclude that $\overline{I}_{1} \leq
\frac{C}{N^{2p}}$ and hence $I_{1} \leq \frac{C}{N^{2p}}$. We now turn to the
last term. Using the Burkholder-Davis-Gundy inequality, we get
\begin{equation}\label{Eq*}
\begin{array}{rcl}
\!\!\ds I_{2} \!\!&\!\!\leq\!\! &\!\! \ds C \delta_{N}^p
\ee\!\left[\!\left(\!\!\sum_{j=0}^{N-1}\!\left(\!\!\sqrt{\frac{1}{\delta_{N}}
\int_{t_{j}}^{t_{j+1}}
   \psi(Y_{s})ds}\!-\!\!\sqrt{\left(\psi(\widetilde{Y}^N_{t_j})+\frac{\sigma
\psi'(\widetilde{Y}^N_{t_j})}{\delta_{N}}\int_{t_j}^{t_{j+1}}
\!\!(W_s-W_{t_j})ds\right)\vee
\underline{\psi}}\right)^{\!\!2}\right)^{\!\!p}\right]\\[8mm]
\!\!&\!\!\leq\!\!&\!\!\ds \delta_N
\sum_{j=0}^{N-1}\ee\left[\left|\sqrt{\frac{1}{\delta_{N}}
\int_{t_{j}}^{t_{j+1}}
   \psi(Y_{s})ds}-\sqrt{\left(\psi(\widetilde{Y}^N_{t_j})+\frac{\sigma
\psi'(\widetilde{Y}^N_{t_j})}{\delta_{N}}\int_{t_j}^{t_{j+1}}
\!\!(W_s-W_{t_j})ds\right)\vee \underline{\psi}}\right|^{2p}\right].
\end{array}
\end{equation}
Assumption ($\mathcal{H}$\ref{spos}) yields that the two terms
appearing in the square root are bounded from below by
$\underline{\psi} >0$ so we have that
\[\begin{array}{rcl}
 \ds I_{2}&\leq&\ds C \delta_{N}  \sum_{j=0}^{N-1} \ee\left[\left|{\frac{1}{\delta_{N}} \int_{t_{j}}^{t_{j+1}}
   \psi(Y_{s})ds}-{\left(\psi(\widetilde{Y}^N_{t_j})+\frac{\sigma
\psi'(\widetilde{Y}^N_{t_j})}{\delta_{N}}\int_{t_j}^{t_{j+1}}
(W_s-W_{t_j})ds\right)\vee \underline{\psi}}\right|^{2p}\right]\\[5mm]
&\leq&\ds C N^{2p-1} \sum_{j=0}^{N-1}
\ee\left[\left|{\int_{t_{j}}^{t_{j+1}}
   \psi(Y_{s})ds}-{\left(\psi(\widetilde{Y}^N_{t_j}) \delta_{N}+\sigma
\psi'(\widetilde{Y}^N_{t_j})\int_{t_j}^{t_{j+1}}
(W_s-W_{t_j})ds\right)}\right|^{2p}\right]\\[5mm]
&\leq&\ds C N^{2p-1}\sum_{j=0}^{N-1}
\left(\overline{I}_2^j+\widetilde{I}_2^j\right)
\end{array}
\]
where
\begin{equation}
\label{termIj2}
\overline{I}_2^j=\ee\left[\left|{\int_{t_{j}}^{t_{j+1}}
   \psi(Y_{s})ds}-{\left(\psi(Y_{t_j}) \delta_{N}+\sigma
\psi'(Y_{t_j})\int_{t_j}^{t_{j+1}}
(W_s-W_{t_j})ds\right)}\right|^{2p}\right]
\end{equation} and
\[\widetilde{I}_2^j=\ee\left[\left| \delta_{N}\left(\psi(Y_{t_j})-\psi(\widetilde{Y}^N_{t_j})\right) +\left(\sigma
\psi'(Y_{t_j})-\sigma
\psi'(\widetilde{Y}^N_{t_j})\right)\int_{t_j}^{t_{j+1}}
(W_s-W_{t_j})ds\right|^{2p}\right].\]

Again, integrating by parts yields that
\[\overline{I}_2^j=\ee\left[\left|\int_{t_{j}}^{t_{j+1}}
(t_{j+1}-s)\left((\sigma\psi'(Y_s)-\sigma
\psi'(Y_{t_j}))dW_s+((b\psi'+\frac{\sigma^2}{2}\psi'')(Y_s))ds\right)\right|^{2p}\right]\]

We control the stochastic integral term as follows
\[\!\!\begin{array}{rcl}
\ds \ee\!\!\left[\left|\int_{t_{j}}^{t_{j+1}}\!\!
(t_{j+1}-s)(\sigma\psi'(Y_s)-\sigma\psi'(Y_{t_j}))dW_s\right|^{2p}\right]\!\!\!\!&\!\!\leq\!\!&\!\!\!\!\ds
C \delta_N^{p-1}\ee\!\!\left[\int_{t_{j}}^{t_{j+1}}\!\!
(t_{j+1}-s)^{2p}|\sigma\psi'(Y_s)-\sigma\psi'(Y_{t_j})|^{2p}ds\right]\\[3mm]
\!\!\!\!&\!\!\leq\!\!&\!\!\!\!\ds C \delta_N^{3p-1} \int_{t_{j}}^{t_{j+1}} \ee\left[\left|\sigma\psi'(Y_s)-\sigma\psi'(Y_{t_j})\right|^{2p}\right]ds\\[3mm]
\!\!\!\!&\!\!\leq\!\!&\!\!\!\!\ds C \delta_N^{3p-1} \int_{t_{j}}^{t_{j+1}} \ee\left[\left|Y_s-Y_{t_j}\right|^{2p}\right]ds\\[3mm]
\!\!\!\!&\!\!\leq\!\!&\!\!\!\!\ds C \delta_N^{3p-1} \int_{t_{j}}^{t_{j+1}} \left|s-t_j\right|^pds\\[3mm]
\!\!\!\!&\!\!\leq\!\!&\!\!\!\!\ds C \delta_N^{4p}.
\end{array}\]
The third inequality is due to assumption ($\mathcal{H}$\ref{hypTh2}) and the
fourth one is a standard result on the control of the moments of the increments
of the solution of a SDE with Lipschitz continuous coefficients (see Problem
3.15 p. 306 of Karatzas and Shreve \cite{KaratzasShreve} for example).

We also control the other term thanks to assumption
($\mathcal{H}$\ref{hypTh2}) :
\[\begin{array}{rcl}
\ds \ee\left[\left|\int_{t_{j}}^{t_{j+1}}
(t_{j+1}-s)(b\psi'+\frac{\sigma^2}{2}\psi'')(Y_s)ds\right|^{2p}\right]\!\!&\leq&\!\!\ds
\delta_N^{2p-1}\ee\left[\int_{t_{j}}^{t_{j+1}}
(t_{j+1}-s)^{2p}|(b\psi'+\frac{\sigma^2}{2}\psi'')(Y_s)|^{2p}ds\right]\\[3mm]
\!\!&\leq&\!\!\ds  \delta_N^{4p-1} \int_{t_{j}}^{t_{j+1}} \ee\left[\left|(b\psi'+\frac{\sigma^2}{2}\psi'')(Y_s)\right|^{2p}\right]ds\\[3mm]
\!\!&\leq&\!\!\ds C \delta_N^{4p}.
\end{array}\]
Hence, $\overline{I}^j_{2} \leq \frac{C}{N^{4p}}$. To conclude the
proof of the theorem, it remains to show a similar result for
$\widetilde{I}^j_{2}$ :
\[\begin{array}{rcl}
\ds \widetilde{I}^j_{2} &\leq&\ds 2^{2p-1} \ee\left[\left|
\delta_{N}\left(\psi(Y_{t_j})-\psi(\widetilde{Y}^N_{t_j})\right)\right|^{2p}
+\left|\left(\sigma \psi'(Y_{t_j})-\sigma
\psi'(\widetilde{Y}^N_{t_j})\right)\int_{t_j}^{t_{j+1}}
(W_s-W_{t_j})ds\right|^{2p}\right]\\[3mm]
&\leq&\ds C\left(\delta_N^{2p}
\ee\left[\left|Y_{t_j}-\widetilde{Y}^N_{t_j}\right|^{2p}\right]+\frac{\delta_N^{3p}}{3^p}\ee\left[\left|Y_{t_j}-\widetilde{Y}^N_{t_j}\right|^{2p}\right]
\right)\\[3mm]
&\leq&\ds \frac{C}{N^{4p}}.
\end{array}\] The second inequality is due to the fact that $\psi$
is Lipschitz continuous (thanks to assumption
($\mathcal{H}$\ref{fbounded})) for the first term and to the
independence of $\left(\sigma \psi'(Y_{t_j})-\sigma
\psi'(\widetilde{Y}^N_{t_j})\right)$ and $\int_{t_j}^{t_{j+1}}
(W_s-W_{t_j})ds$ for the second term.
\end{proofOFT}

\begin{remark}
\label{MLMC} Our scheme exhibits the same convergence properties as the
Cruzeiro \textit{et al.} \cite{CruzeiroMalliavinThalmaier} scheme. In addition
to the fact that it involves fewer terms, it presents the advantage of
improving the multilevel Monte Carlo convergence. This method, which is a
generalization of the statistical Romberg extrapolation method of Kebaier
\cite{Kebaier}, was introduced by Giles \cite{Giles1,Giles2}.

Indeed, consider the discretization scheme with time step
$\delta_{2N}=\frac{T}{2N}$ :
\[\begin{array}{l} \ds
\forall 0\leq k\leq 2N-1,\,\,
\widetilde{X}_{{\frac{(k+1)T}{2N}}}^{2N}=\widetilde{X}_{{\frac{kT}{2N}}}^{2N}+\rho
\left(F(\widetilde{Y}^{2N}_{\frac{(k+1)T}{2N}})-F(\widetilde{Y}^{2N}_{{\frac{kT}{2N}}})\right)+ \delta_{2N} h(\widetilde{Y}^{2N}_{{\frac{kT}{2N}}})+\sqrt{1-\rho^2}\\[4mm]
\ds \quad\quad \quad\quad \quad\quad \quad\quad \quad
\times\sqrt{\left(\psi(\widetilde{Y}^{2N}_{{\frac{kT}{2N}}})+\frac{\sigma
\psi'(\widetilde{Y}^{2N}_{{\frac{kT}{2N}}})}{\delta_{2N}}\int_{{\frac{kT}{2N}}}^{{\frac{(k+1)T}{2N}}}
(W_s-W_{{\frac{kT}{2N}}})ds\right) \vee \underline{\psi}}\,\,
\left(B_{{\frac{(k+1)T}{2N}}}-B_{\frac{kT}{2N}}\right).
\end{array}\]
Denote by
$v^{2N}_k=\sqrt{1-\rho^2}\sqrt{\left(\psi(\widetilde{Y}^{2N}_{{\frac{kT}{2N}}})+\frac{\sigma
\psi'(\widetilde{Y}^{2N}_{{\frac{kT}{2N}}})}{\delta_{2N}}\int_{{\frac{kT}{2N}}}^{{\frac{(k+1)T}{2N}}}
(W_s-W_{{\frac{kT}{2N}}})ds\right) \vee \underline{\psi}}$ the
random variable which multiplies the increment of the Brownian
motion $\left(B_{{\frac{(k+1)T}{2N}}}-B_{\frac{kT}{2N}}\right)$.
Because of the independence properties,
$\left(\widetilde{X}^N_{t_k}\right)_{0\leq k\leq N}$ has the same
distribution law as the vector
$\left(\widetilde{\widetilde{X}}^N_{t_k}\right)_{0\leq k\leq N}$
defined inductively by $\widetilde{\widetilde{X}}^N_{t_0}=\log(s_0)$
and
\[\begin{array}{l} \ds
\forall 0\leq k\leq N-1,\,\,
\widetilde{\widetilde{X}}_{t_{k+1}}^{N}=\widetilde{\widetilde{X}}_{t_{k}}^{N}+\rho
\left(F(\widetilde{Y}^{N}_{t_{k+1}})-F(\widetilde{Y}^{N}_{t_{k}})\right)+ \delta_{N} h(\widetilde{Y}^{N}_{t_k})\\[4mm]
\ds \quad \quad\quad \quad\quad \quad\quad \quad\quad \quad
+\sqrt{1-\rho^2}\sqrt{\left(\psi(\widetilde{Y}^{N}_{t_k})+\frac{\sigma
\psi'(\widetilde{Y}^{N}_{t_k})}{\delta_{N}}\int_{t_k}^{t_{k+1}}
(W_s-W_{t_k})ds\right) \vee \underline{\psi}}\,\, \Delta
\widetilde{B}^{N}_{k+1}
\end{array}\]
where
\begin{equation}
\label{couplage} \Delta
\widetilde{B}^{N}_{k+1}=\sqrt{2}\left[\frac{v^{2N}_{2k}\left(B_{{\frac{(2k+1)T}{2N}}}-B_{\frac{2kT}{2N}}\right)+v^{2N}_{2k+1}
\left(B_{{\frac{(2k+2)T}{2N}}}-B_{\frac{(2k+1)T}{2N}}\right)}{\sqrt{\left(v^{2N}_{2k}\right)^2+\left(v^{2N}_{2k+1}\right)^2}}\right].
\end{equation}

Going over the proof of the theorem, one can show in the same way
that
\begin{equation}
\label{IllusStrong} \ee\left[\max_{0\leq k \leq N}
\left|\widetilde{\widetilde{X}}^N_{t_k}-\widetilde{X}^{2N}_{t_{k}}\right|^2\right]
= \mathcal{O}(N^{-2}).
\end{equation} Hence, one can apply the
multilevel Monte Carlo method to compute the expectation of a
Lipschitz continuous functional of $X$ and reduce the computational
cost to achieve a desired root-mean-square error of $\epsilon >0$ to
a $\mathcal{O}(\epsilon^{-2})$.

To summarize, the particular structure of our scheme enabled us to reconstruct
the coupling which allows to efficiently control the error between the scheme
with time step $\frac{T}{N}$ and the one with time step $\frac{T}{2N}$. This
does not seem possible with the Cruzeiro \textit{et al.}
\cite{CruzeiroMalliavinThalmaier} scheme.

\end{remark}

{From} a practical point of view, it is more interesting to obtain a
convergence result for the stock price. It is also more challenging
because the exponential function is not globally Lipschitz
continuous. We can nevertheless state the following corollary with
some general assumptions and we will see in the next section that we
can make them more precise in the case where $(Y_t)_{t\in[0,T]}$ is an
Ornstein-Uhlenbeck process.

\begin{Corollary}
\label{CorolStock}Let $p\geq1$. Under the assumptions of Theorem
\ref{THfort} and if
\begin{hypo}
\label{hypStock} $\ds \exists \epsilon >0 \text{ such that
}\ee\left[\max_{0\leq k \leq N} S_{t_k}^{2p+\epsilon}\right]
+\ee\left[\max_{0\leq k \leq N} e^{(2p+\epsilon)
\widetilde{X}^N_{t_k}}\right]<\infty$
\end{hypo}
then there exists a positive constant $C$ independent of $N$ such
that
\[\ee\left[\max_{0\leq k\leq N}
\left|e^{\widetilde{X}_{t_k}}-e^{\widetilde{X}^N_{t_k}}\right|^{2p}\right]
\leq \frac{C}{N^{2p}}.\]
\end{Corollary}
\begin{proof}
Using Hölder inequality we have that
\[\begin{array}{rcl} \ds \ee\left[\max_{0\leq k\leq N}
\left|e^{\widetilde{X}_{t_k}}-e^{\widetilde{X}^N_{t_k}}\right|^{2p}\right]
&\leq&\ds \ee\left[\max_{0\leq k\leq N}\left(e^{2p
\widetilde{X}_{t_k}} \vee e^{2p \widetilde{X}^N_{t_k}}\right)
\left|\widetilde{X}_{t_k}-\widetilde{X}^N_{t_k}\right|^{2p}\right]
\\[5mm]
&\leq&\ds \left(\ee\left[\max_{0\leq k\leq N}
S_{t_k}^{2p+\epsilon}\right]+\ee\left[\max_{0\leq k\leq N}
e^{(2p+\epsilon)
\widetilde{X}^N_{t_k}}\right]\right)^{\frac{2p}{2p+\epsilon}}\\[5mm]
&&\ds\quad\quad\quad\times \left(\ee\left[\max_{0\leq k\leq N}
\left|\widetilde{X}_{t_k}-\widetilde{X}^N_{t_k}\right|^{\frac{2p\epsilon+4p^2}{\epsilon}}\right]\right)^{\frac{\epsilon}{2p+\epsilon}}.
\end{array}
\]
We conclude by assumption ($\mathcal{H}$\ref{hypStock}) and Theorem
\ref{THfort}.
\end{proof}

\begin{remark}
Had we introduced a new cut-off to our scheme as follows
\[\begin{array}{l} \ds
\widetilde{X}_{t_{k+1}}^N=\widetilde{X}_{t_{k}}^N+\rho
\left(F(\widetilde{Y}^N_{t_{k+1}})-F(\widetilde{Y}^N_{t_{k}})\right)+ \delta_N h(\widetilde{Y}^N_{t_k})\\[4mm]
\ds \quad \quad
+\sqrt{1-\rho^2}\sqrt{\left(\psi(\widetilde{Y}^N_{t_k})+\frac{\sigma
\psi'(\widetilde{Y}^N_{t_k})}{\delta_N}\int_{t_k}^{t_{k+1}}
(W_s-W_{t_k})ds\right)\wedge \overline{\psi} \vee
\underline{\psi}}\,\, \Delta B_{k+1}
\end{array}\]
assumption ($\mathcal{H}$\ref{hypStock}) would have been induced by
assuming that the functions $F,f$ and $h$ are bounded.
\end{remark}

\subsection{Special case of an Ornstein-Uhlenbeck process driving the
volatility}

For many stochastic volatility models, the process
$(Y_t)_{t\in[0,T]}$ which drives the volatility is an
Ornstein-Uhlenbeck process. For example, this is the case for all
the models cited in the introduction but the Heston model.
Therefore, it is useful to focus on this particular case. We will
hereafter suppose that $(Y_t)_{t\in[0,T]}$ is the solution of the
following SDE
\begin{equation}
\label{OU} dY_t=\nu dW_t+\kappa(\theta-Y_t)dt,\;Y_0=y_0
\end{equation}
with $\nu>0$ and $\kappa,\theta\in\rr$. Since exact simulation is
possible, we can replace the Milstein discretization by the true
solution in our previous scheme :

$\,$

\framebox{\textbf{WeakTraj\_1 scheme when Y is an O-U process}}
\begin{equation}
\label{strong_Y-OU} \begin{array}{l} \ds
\widetilde{X}_{t_{k+1}}^N=\widetilde{X}_{t_{k}}^N+\rho
\left(F(Y_{t_{k+1}})-F(Y_{t_{k}})\right)+ \delta_N h(Y_{t_k})\\[4mm]
\ds \quad \quad +\sqrt{1-\rho^2}\sqrt{\left(\psi(Y_{t_k})+\frac{\nu
\psi'(Y_{t_k})}{\delta_N}\int_{t_k}^{t_{k+1}} (W_s-W_{t_k})ds\right)
\vee \underline{\psi}}\,\, \Delta B_{k+1}.
\end{array}
\end{equation}
Note that we require the exact simulation of both
$(Y_{t_k},Y_{t_{k+1}})$ and $\int_{t_k}^{t_{k+1}} (W_s-W_{t_k}) ds$.
The unique solution of (\ref{OU}) is $Y_t=y_0e^{-\kappa
t}+\theta(1-e^{-\kappa t})+\nu\int_0^t e^{-\kappa (t-s)} dW_s$ and
one can easily deduce that, $\forall k \in\{0,\dots,N-1\}$,
\[\left(\begin{array}{c}
Y_{t_{k+1}}-e^{-\kappa \delta_N}Y_{t_k}\\[3mm]
\int_{t_k}^{t_{k+1}} (W_s-W_{t_k}) ds
\end{array}\right)\sim
\mathcal{N}\left(M,\Gamma\right)\] where $M=\left(\begin{array}{c}
\theta(1-e^{-\kappa
\delta_N})\\[3mm]0\end{array}\right)$ and $\Gamma=\left(\begin{array}{cc}
\frac{\nu^2}{2\kappa}(1-e^{-2\kappa \delta_N})&\frac{\nu}{\kappa^2} (1-e^{-\kappa \delta_N}(1+\kappa \delta_N))\\[3mm]
\frac{\nu}{\kappa^2} (1-e^{-\kappa \delta_N}(1+\kappa \delta_N))&
\frac{\delta_N^3}{3}\end{array}\right)$.

\vspace{2mm} We first state the following technical lemma whose
proof is postponed to the appendix :

\begin{lemma}\label{MomentExpo}
$\forall\, c_1>0, c_2 \in[0,1)$,
\[\ee\left(e^{c_1 \sup_{0\leq t \leq T} |Y_t|^{1+c_2}}\right) <
\infty.\]

\begin{equation}\label{contprobzero}
\mbox{Moreover, when }y_0\neq 0,\;\forall \alpha>0,\;\exists
C<+\infty,\;\forall N\in\nn^*,\;\sup_{t\in[0,T]}\pp\left[|Y_t|\leq
N^{-\alpha}\right]\leq CN^{-\alpha}.
\end{equation}
\end{lemma}

As might be expected, it is possible to weaken the assumptions of
Theorem \ref{THfort}. In particular, we relax the assumption on the
lower bound of the volatility ($\mathcal{H}$\ref{spos}) and replace
it with a weaker one (see assumption ($\mathcal{H}$\ref{hypOU})
below). The following theorem applies for Scott's
    model \cite{Scott} (and therefore for the Hull and White
   \cite{HullWhite} model) where we have
   $h(y)=r-\frac{\sigma_0^2e^{2y}}{2}-\rho
   \sigma_0e^y(\frac{\kappa}{\nu}(\theta-y)+\frac{\nu}{2})$ and
   $\psi(y)=\sigma_0^2e^{2y}$.
\begin{theorem}
\label{THfort_Y-OU}Let $p\geq 1$. Suppose that $Y$ is solution of
(\ref{OU}) and that the scheme is defined by (\ref{strong_Y-OU}).
Under assumption ($\mathcal{H}$\ref{spos}) of Theorem \ref{THfort}
and if
\begin{hypo}
\label{fC3} $f$ is  a $\mathcal{C}^3$ function
\end{hypo}
\begin{hypo}
\label{hypTh1_YOU} there exist three constants $c_0>0, c_1>0$ and
$c_2 \in[0,1)$ such that, $\forall y \in \rr,$
\[\begin{array}{l}
 \ds \Big|\kappa(\theta-y)h'(y)+\frac{\nu^{2}}{2}h''(y)\Big|
 \leq c_{0} e^{c_1|y|^{1+c_2}}\\[2mm]
 \ds \big|h'(y)\big|
 \leq c_{0} e^{c_1|y|^{1+c_2}}\\[1mm]
   \ds \Big|\kappa(\theta-y)\psi'(y)+\frac{\nu^{2}}{2}\psi''(y)\Big|
 \leq c_{0} e^{c_1|y|^{1+c_2}}\\[2mm]
 \ds \big|\psi''(y)\big|
 \leq c_{0} e^{c_1|y|^{1+c_2}}
 \end{array}\]

\end{hypo}

then there exists a constant $C$ independent of the number of time
steps $N$ such that
\[\ee\left[\max_{0\leq k\leq N}
\Big|\widetilde{X}_{t_k}-\widetilde{X}^N_{t_k}\Big|^{2p}\right] \leq
\frac{C}{N^{2p}}\]

The same result holds true when we replace assumption
($\mathcal{H}$\ref{spos}) by
\begin{hypo}
\label{hypOU}
\[\begin{array}{rcl}
\ds \forall y\in \rr,\;\psi(y)&>&\ds 0\\[3mm]
\ds \sup_{t\leq T}\ee\left(\frac{1}{\psi^{p(1+\epsilon)}(Y_t)}\right)&<&\ds \infty.\\
\end{array}\]
\end{hypo}
\end{theorem}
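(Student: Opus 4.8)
The plan is to mimic the structure of the proof of Theorem \ref{THfort}, keeping the decomposition
$\ee[\max_k|\widetilde{X}_{t_k}-\widetilde{X}^N_{t_k}|^{2p}]\le 3^{2p-1}(\rho^{2p}I_0+I_1+(1-\rho^2)^pI_2)$,
but with two crucial simplifications: first, $Y$ is now simulated exactly, so $I_0$ vanishes identically and in $I_1$ the term $\delta_N^{2p}\ee[\max_k|\sum_j h(Y_{t_j})-h(\widetilde Y^N_{t_j})|^{2p}]$ is also zero; second, the exact law of $Y$ is explicit and Lemma \ref{MomentExpo} gives us control of \emph{exponential} moments of $\sup_{t\le T}|Y_t|^{1+c_2}$, which is what compensates for the fact that in \hypref{hypTh1_YOU} the bounds on $h',h'',\psi',\psi''$ are only exponential in $|y|^{1+c_2}$ rather than linear. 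So the skeleton is the same; only the moment estimates change.

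First I would treat $I_1=\overline{I}_1$ (the part involving $\int_{t_j}^{t_{j+1}}h(Y_s)-h(Y_{t_j})\,ds$). As in the original proof, integrate by parts in time to write this as $\sum_j\int_{t_j}^{t_{j+1}}(t_{j+1}-s)\big((\kappa(\theta-Y_s)h'+\tfrac{\nu^2}{2}h'')(Y_s)\,ds+\nu h'(Y_s)\,dW_s\big)$, bound the drift part by Jensen and the martingale part by Burkholder--Davis--Gundy, obtaining a factor $\delta_N^{2p}$ times $\int_0^T\ee[|(\kappa(\theta-Y_s)h'+\tfrac{\nu^2}{2}h'')(Y_s)|^{2p}+|\nu h'(Y_s)|^{2p}]\,ds$. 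Now instead of invoking the linear-growth hypotheses and $\sup_t\ee|Y_t|^{2p}<\infty$, I use \hypref{hypTh1_YOU} to bound the integrand by $C\,\ee[e^{2pc_1|Y_s|^{1+c_2}}]\le C\,\ee[e^{2pc_1\sup_{t\le T}|Y_t|^{1+c_2}}]$, which is finite by Lemma \ref{MomentExpo}. Hence $I_1\le C/N^{2p}$.

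Next comes $I_2$. Under \hypref{spos} the argument runs exactly as before: the $\vee\underline\psi$ cut-off together with $\underline\psi>0$ lets me remove the square roots at the cost of a constant $1/\underline\psi$, reduce to controlling $\sum_j(\overline I_2^j+\widetilde I_2^j)$ with $N^{2p-1}$ in front, and then estimate each piece. For $\overline I_2^j$, integrate by parts in time to express it via $\int_{t_j}^{t_{j+1}}(t_{j+1}-s)\big((\nu\psi'(Y_s)-\nu\psi'(Y_{t_j}))\,dW_s+(\kappa(\theta-Y_s)\psi'+\tfrac{\nu^2}{2}\psi'')(Y_s)\,ds\big)$; the drift part is handled by Jensen plus \hypref{hypTh1_YOU} and Lemma \ref{MomentExpo} exactly as for $I_1$, giving $O(\delta_N^{4p})$. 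For the martingale part I need $\ee[|\psi'(Y_s)-\psi'(Y_{t_j})|^{2p}]\le C|s-t_j|^p$; since $\psi'$ is no longer assumed Lipschitz, I write $\psi'(Y_s)-\psi'(Y_{t_j})=(Y_s-Y_{t_j})\int_0^1\psi''(Y_{t_j}+u(Y_s-Y_{t_j}))\,du$, apply Cauchy--Schwarz, use the standard moment bound $\ee[|Y_s-Y_{t_j}|^{4p}]\le C|s-t_j|^{2p}$ for the OU process, and bound the $\psi''$ factor by $C\ee[e^{qc_1\sup_t|Y_t|^{1+c_2}}]$ via \hypref{hypTh1_YOU} and Lemma \ref{MomentExpo}; this yields $\overline I_2^j=O(\delta_N^{4p})$. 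For $\widetilde I_2^j$, since $Y$ is exact, $Y_{t_j}-\widetilde Y^N_{t_j}=0$, so $\widetilde I_2^j\equiv 0$. Summing, $I_2\le C N^{2p-1}\cdot N\cdot O(N^{-4p})=C/N^{2p}$.

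Finally, for the variant with \hypref{spos} replaced by \hypref{hypOU}, the only point that changes is the removal of the square roots in $I_2$: I can no longer bound $1/\sqrt{\text{arg}}$ by $1/\sqrt{\underline\psi}$ pointwise. Instead, from \eqref{Eq*}, before removing the roots, I use $|\sqrt a-\sqrt b|\le|a-b|/(\sqrt a+\sqrt b)\le |a-b|(a^{-1/2}\wedge b^{-1/2})$ together with the fact that the first argument $\tfrac1{\delta_N}\int_{t_j}^{t_{j+1}}\psi(Y_s)\,ds\ge \tfrac1{\delta_N}\int_{t_j}^{t_{j+1}}\inf\psi\cdot(\cdots)$ — more precisely I bound $|\sqrt{v_j}-\sqrt{v_j^N\vee\underline\psi}|^{2p}$ by a constant times $|v_j-v_j^N|^{2p}\,v_j^{-p}$ (the $\vee\underline\psi$ cut-off only helps), then apply Hölder with the exponent $1+\epsilon$ supplied by \hypref{hypOU}: $\ee[|v_j-v_j^N|^{2p}v_j^{-p}]\le \ee[|v_j-v_j^N|^{2p(1+\epsilon)/\epsilon\cdot\epsilon/(1+\epsilon)}]^{\cdots}\cdot\sup_t\ee[\psi(Y_t)^{-p(1+\epsilon)}]^{1/(1+\epsilon)}$, using Jensen in time to move $\psi(Y_s)^{-\cdot}$ inside, and the remaining $\ee[|v_j-v_j^N|^{2p(1+\epsilon)/\epsilon}]$ is handled by the $\overline I_2$ analysis above with $p$ replaced by $p(1+\epsilon)/\epsilon$, where the condition $|\psi'|\le C\psi$ and $\sup_t\ee[\psi(Y_t)^{p(1+\epsilon)}]<\infty$ take over the role of the growth bounds on $\psi',\psi''$. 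Everything else is unchanged.

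\textbf{Main obstacle.} The delicate step is the treatment of $I_2$ under \hypref{hypOU}: getting the square-root-difference estimate to interact correctly with Hölder so that the negative moments $\ee[\psi(Y_t)^{-p(1+\epsilon)}]$ and the positive moments of $|v_j-v_j^N|$ are controlled \emph{simultaneously} with a uniform constant, and checking that the bound $|\psi'|\le C\psi$ is enough to push the $\overline I_2^j=O(N^{-4p})$ estimate through at the inflated exponent $p(1+\epsilon)/\epsilon$. By contrast, once one has Lemma \ref{MomentExpo}, the exponential-growth hypotheses in \hypref{hypTh1_YOU} are handled essentially routinely.
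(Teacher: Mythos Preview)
For the first part of the theorem (under ($\mathcal{H}$\ref{spos})), your plan coincides with the paper's: rerun the proof of Theorem~\ref{THfort} with the Milstein-error terms dropped (so $I_0=0$ and $\widetilde I_2^j=0$), and replace the linear-growth moment bounds by ($\mathcal{H}$\ref{hypTh1_YOU}) together with Lemma~\ref{MomentExpo}.

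For the second part (under ($\mathcal{H}$\ref{hypOU})), your route differs from the paper's and is in fact cleaner. Writing $A_j=\tfrac{1}{\delta_N}\int_{t_j}^{t_{j+1}}\psi(Y_s)\,ds$ and $D_j$ for the discretised quantity, the paper splits into two events: on $\{D_j>\psi(Y_{t_j})/2\}$ it bounds $|\sqrt{A_j}-\sqrt{D_j}|^{2p}$ by $C\big(A_j^{-p}+\psi(Y_{t_j})^{-p}\big)|A_j-D_j|^{2p}$ and applies H\"older with the negative-moment hypothesis; on the complementary event $\{D_j\le\psi(Y_{t_j})/2\}$ it bounds the square-root difference crudely by $C\big(A_j^p+\psi(Y_{t_j})^p\big)$, H\"olders against the indicator, and shows $\pp(D_j\le\psi(Y_{t_j})/2)=O(N^{-\alpha})$ for every $\alpha$ via a Gaussian-tail estimate that exploits $|\psi'|\le C\psi$. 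Your argument skips the split entirely: since $A_j>0$ a.s., the single inequality $|\sqrt{A_j}-\sqrt{D_j}|^{2p}\le|A_j-D_j|^{2p}A_j^{-p}$ followed by H\"older (with Jensen on $A_j^{-p(1+\epsilon)}$) already gives the result. One correction to your explanation: this route uses only the \emph{negative}-moment condition in ($\mathcal{H}$\ref{hypOU}); the assumptions $|\psi'|\le C\psi$ and $\sup_t\ee[\psi(Y_t)^{p(1+\epsilon)}]<\infty$ play no role in your argument, contrary to what your last sentence suggests. What makes the inflated-exponent estimate $\ee[|A_j-D_j|^{2p(1+\epsilon)/\epsilon}]=O(\delta_N^{2p(1+\epsilon)/\epsilon})$ go through is simply that ($\mathcal{H}$\ref{hypTh1_YOU}) is still assumed (only ($\mathcal{H}$\ref{spos}) was replaced) and Lemma~\ref{MomentExpo} holds for \emph{any} constant in the exponent, so your $\overline I_2^j$ analysis from the first part carries over verbatim with $p$ replaced by $p(1+\epsilon)/\epsilon$.
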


\begin{proof}
The proof of the first part of the theorem repeats the proof of
Theorem \ref{THfort} with fewer terms to control because of the
exact simulation of $(Y_t)_{t\in[0,T]}$. At the places where we used
assumptions ($\mathcal{H}$\ref{hypTh1}) and
($\mathcal{H}$\ref{hypTh2}), we use assumption
($\mathcal{H}$\ref{hypTh1_YOU}) together with Lemma
\ref{MomentExpo}.

We now focus on the second part of the theorem. According to
equation (\ref{Eq*}), all we have to show is the existence of a
positive constant $C$ independent of $N$ such that $\forall j \in
\{0,\dots, N-1\}$
\[\ee\left[\left|\sqrt{A_j}-\sqrt{D_j}\right|^{2p}\right] \leq
\frac{C}{N^{2p}}\] where
\begin{itemize}
\item[-] $A_j = \frac{1}{\delta_{N}} \int_{t_{j}}^{t_{j+1}}\psi(Y_{s})ds$,
\item[-] $D_j=\left(\psi(Y_{t_j})+\frac{\nu
\psi'(Y_{t_j})}{\delta_{N}}\int_{t_j}^{t_{j+1}}
(W_s-W_{t_j})ds\right)\vee \underline{\psi}$.
\end{itemize}
One has
\begin{align*}
  |\sqrt{A_j}-\sqrt{D_j}|^{2p}=\frac{|A_j-D_j|^{2p}}{(\sqrt{A_j}+\sqrt{D_j})^{2p}}\leq \frac{|A_j-D_j|^{2p}}{A_j^p}.
\end{align*}
Hence by H\"older's and Jensen's inequalities,
\begin{align*}
 \ee \left[|\sqrt{A_j}-\sqrt{D_j}|^{2p}\right]&\leq \left(\ee\left[\frac{1}{A_j^{p(1+\epsilon)}}\right]\right)^{\frac{1}{1+\epsilon}}
\left(\ee\left[|A_j-D_j|^{2p\frac{1+\epsilon}{\epsilon}}\right]\right)^{\frac{\epsilon}{1+\epsilon}}\\&\leq
\left(\frac{1}{\delta_N} \int_{t_j}^{t_{j+1}}\!\!
\ee\left(\frac{1}{\psi^{p(1+\epsilon)}(Y_s)}\right)
ds\right)^{\frac{1}{1+\epsilon}}\left(\ee\left[|A_j-D_j|^{2p\frac{1+\epsilon}{\epsilon}}\right]\right)^{\frac{\epsilon}{1+\epsilon}}.
\end{align*}

Thanks to assumption ($\mathcal{H}$\ref{hypOU}), the first term in
the right-hand-side is smaller than a finite constant not depending
on $N$ whereas the second term is smaller than $C\delta_N^{2p}$
according to the estimation of $\overline{I}^j_2$ in the proof of
Theorem \ref{THfort}.\end{proof} The following proposition is
dedicated to the Stein and Stein
   \cite{SteinStein} and the quadratic Gaussian models which satisfy neither
   assumption ($\mathcal{H}$\ref{spos}) nor assumption ($\mathcal{H}$\ref{hypOU}) since $\psi(y)$ vanishes at the origin.
\begin{proposition}
\label{SSqG}
  Suppose that $Y$ is solution of
(\ref{OU}) starting from $y_0$ and that the scheme is defined
by (\ref{strong_Y-OU}). Then for $p\geq 1$ and $\varepsilon>0$,
there is a constant $C$ not depending on $N$ such that
\[\ee\left[\max_{0\leq k\leq N}
\Big|\widetilde{X}_{t_k}-\widetilde{X}^N_{t_k}\Big|^{2p}\right] \leq
\begin{cases}
  \frac{C}{N^{\frac{2p+1}{2}-\varepsilon}}\mbox{ in the Stein and Stein
   model when $y_0\neq 0$}\\
\frac{C}{N^{2p-\varepsilon}}\mbox{ in the quadratic
Gaussian model}
\end{cases}.\]
In particular, the order of convergence in $L^2$ is not smaller that
$3/4-\varepsilon$ in the Stein and Stein model.
\end{proposition}
\begin{proof}
In both the Stein and Stein and the quadratic Gaussian models the
function $h(y)$, respectively equal to
$r-\frac{y^2}{2}-\rho\left(\frac{\kappa
y}{\nu}(\theta-y)+\frac{\nu}{2}\right)$ and
$r-\frac{y^4}{2}-\rho\left(\frac{\kappa y^2}{\nu}(\theta-y)+\nu
y\right)$ satisfies the first two inequalities of assumption
($\mathcal{H}$\ref{hypTh1_YOU}). So we only need to focus on
$\ee\left[\left|\sqrt{A_j}-\sqrt{D_j}\right|^{2p}\right]$ with
$$(A_j,D_j,\psi(y))=\begin{cases}\left(\frac{1}{\delta_{N}} \int_{t_{j}}^{t_{j+1}}Y^2_{s}ds,\left(Y^2_{t_j}+\frac{2\nu
Y_{t_j}}{\delta_{N}}\int_{t_j}^{t_{j+1}} (W_s-W_{t_j})ds\right)^+,y^2\right)\mbox{ in the Stein and Stein model}\\
\left(\frac{1}{\delta_{N}} \int_{t_{j}}^{t_{j+1}}Y^4_{s}ds,\left(Y^4_{t_j}+\frac{4\nu
Y^3_{t_j}}{\delta_{N}}\int_{t_j}^{t_{j+1}} (W_s-W_{t_j})ds\right)^+,y^4\right)\mbox{ in the quadratic Gaussian model}
\end{cases}.
$$
So in both cases $D_j\leq \frac{\psi(Y_{t_j})}{2}\Rightarrow
\left|\frac{4\nu }{\delta_{N}}\int_{t_j}^{t_{j+1}}
(W_s-W_{t_j})ds\right|\geq \frac{|Y_{t_j}|}{2}$. In what follows, we
use repeatedly that the power function $\psi$ is non-decreasing on
the positive half line and commutes with other power functions. Let
$(\alpha_l)_{1\leq l\leq L}$ be a decreasing sequence in
$(0,\frac{1}{2})$. Using the convention $N^{-\alpha_{L+1}}=+\infty$,
one has
\begin{equation}
\begin{array}{rcl}
\ds \ee\left[\left|\sqrt{A_j}-\sqrt{D_j}\right|^{2p}\right]& \leq &\ds \ee\left[\left|\sqrt{A_j}-\sqrt{D_j}\right|^{2p}\ind_{\{|Y_{t_j}|<N^{-\alpha_{1}}\}}\right]\\
&&\ds +\sum_{l=1}^L\ee\left[\frac{\left|A_j-D_j\right|^{2p}}{(\sqrt{A_j}+\sqrt{D_j})^{2p}}
\ind_{\{N^{-\alpha_{l}}\leq |Y_{t_j}|<N^{-\alpha_{l+1}},D_j\geq \frac{\psi(Y_{t_j})}{2}\}}\right]\\
&&\ds +\sum_{l=1}^L\ee\left[\left|\sqrt{A_j}-\sqrt{D_j}\right|^{2p}\ind_{\{N^{-\alpha_{l}}\leq |Y_{t_j}|<N^{-\alpha_{l+1}},\left|\frac{4\nu
}{\delta_{N}}\int_{t_j}^{t_{j+1}} (W_s-W_{t_j})ds\right|\geq \frac{|Y_{t_j}|}{2}\}}\right]\\
&\leq &\ds \ee\left[(A_j^p+D^p_j)\ind_{\{|Y_{t_j}|<N^{-\alpha_{1}}\}}\right]+
C\sum_{l=1}^L\psi(N^{p\alpha_{l}})\ee\left[\left|A_j-D_j\right|^{2p}\ind_{\{|Y_{t_j}|<N^{-\alpha_{l+1}}\}}\right]\\
&&+\ee\left[(A_j^p+D^p_j)\ind_{\{|\frac{4\nu}{\delta_{N}}\int_{t_j}^{t_{j+1}}
(W_s-W_{t_j})ds|>\frac{N^{-\alpha_{1}}}{2}\}}\right].
\end{array}
\label{decompsteinstein}
\end{equation}
Since conditionally on $Y_{t_j}$, for $s\in[t_j,t_{j+1}]$,
$Y_s\sim{\mathcal N}_1(Y_{t_j}e^{-\kappa
(s-t_j)}+\theta(1-e^{-\kappa
(s-t_j)}),\frac{\nu^2}{2\kappa}(1-e^{-2\kappa (s-t_j)}))$
(convention : $\frac{\nu^2}{2\kappa}(1-e^{-2\kappa
(s-t_j)})=\nu^2(s-t_j)$ when $\kappa=0$) and
$\frac{1}{\delta_N}\int_{t_j}^{t_{j+1}} (W_s-W_{t_j})ds\sim{\mathcal
N}_1(0,\frac{\delta_N}{3})$ is independent from $Y_{t_j}$ the first
term in the right-hand-side of \eqref{decompsteinstein} is not
greater than
\begin{align*}
&\ee\!\left[\!\frac{1}{\delta_N}\!\int_{t_j}^{t_{j+1}}\!\!\!\!\ee\left[\psi(Y_s)^p|Y_{t_j}\right]ds
\ind_{\{|Y_{t_j}|<N^{-\alpha_{1}}\}}\!\right]\!\!+\!2^{p-1}\ee\!\left[\!\left(\psi(Y_{t_j})^p\!+\!\left|\frac{\nu
\psi'(Y_{t_j})}{\delta_{N}}\int_{t_j}^{t_{j+1}}\!\!\!\!
(W_s-W_{t_j})ds\right|^p\right)\ind_{\{|Y_{t_j}|<N^{-\alpha_{1}}\}}\right]\\
&\leq
C\ee\left[(\psi(Y_{t_j})^p+\psi(\delta_N^\frac{p}{2}))\ind_{\{|Y_{t_j}|<N^{-\alpha_{1}}\}}\right]+
C(\psi(N^{-p\alpha_1})+\psi'(N^{-p\alpha_1})N^{-\frac{p}{2}})\pp(|Y_{t_j}|<N^{-\alpha_{1}})\\&\leq
C\psi(N^{-p\alpha_1})\pp(|Y_{t_j}|<N^{-\alpha_{1}}),
\end{align*}
where we used $\alpha_1<\frac{1}{2}$ for
the last inequality.
By Hölder's inequality and since
$\alpha_1<\frac{1}{2}$ and $G=\sqrt{\frac{3}{\delta^3_N}}\int_{t_j}^{t_{j+1}}
(W_s-W_{t_j})ds\sim{\mathcal N}_1(0,1)$, the third term in the
right-hand-side of \eqref{decompsteinstein} is not greater than a constant multiplying
\begin{align*}
\ee^{\frac{1}{1+\xi}}\!\!\left[\!\frac{1}{\delta_N}\!\!\int_{t_j}^{t_{j+1}}\!\!\!\!\psi(Y_s)^{p(1+\xi)}ds+
\psi(Y_{t_j})^{p(1+\xi)}+\frac{|\psi'(Y_{t_j})|^{p(1+\xi)}}{\delta_N}
\int_{t_j}^{t_{j+1}}\!\!\!\!\psi((s-t_j)^{\frac{p(1+\xi)}{4}})ds\right]\!\!\pp^{\frac{\xi}{1+\xi}}\left(|G|\geq
CN^{\frac{1}{2}-\alpha_1}\right).
\end{align*}
The first term of the product is bounded whereas, by the usual bound
of the tail of the normal law $\forall t>0, \pp(|G| \geq t) \leq
\frac{2e^{-\frac{t^2}{2}}}{t\sqrt{2\pi}}$, the product of the second
term by any polynomial function of $N$ is bounded.

Now for $l\in\{1,\hdots,L\}$, $\ee\left[\left|A_j-D_j\right|^{2p}\ind_{\{|Y_{t_j}|<N^{-\alpha_{l+1}}\}}\right]$ is not greater than
\begin{align*}
   \frac{1}{\delta_N^{2p}}\ee\left[\left|\int_{t_{j}}^{t_{j+1}}
(t_{j+1}-s)\left((\nu\psi'(Y_s)-\nu
\psi'(Y_{t_j}))dW_s+((b\psi'+\frac{\nu^2}{2}\psi'')(Y_s))ds\right)\right|^{2p}\ind_{\{|Y_{t_j}|<N^{-\alpha_{l+1}}\}}\right],
\end{align*}
with $b(y)=\kappa(\theta-y)$. As, by convention, $N^{-\alpha_{L+1}}=+\infty$, reasoning like in the estimation of $\overline{I}^j_2$ in the proof
of Theorem \ref{THfort}, one checks that the term with index $l=L$ in the sum in the right-hand-side of
\eqref{decompsteinstein} is smaller than $C \psi(N^{p\alpha_L})N^{-2p}$.
In the quadratic Gaussian model, since $\psi'(y)=3y^3$, using Burckholder-Davis-Gundy inequality then Hölder's inequality, one obtains that for $\xi>0$,
\begin{align*}
   \ee&\left[\left|\int_{t_{j}}^{t_{j+1}}
(t_{j+1}-s)(\psi'(Y_s)-\psi'(Y_{t_j}))dW_s\right|^{2p}\ind_{\{|Y_{t_j}|<N^{-\alpha_{l+1}}\}}\right]\\&\leq \frac{C}{N^{p-1}}\int_{t_j}^{t_{j+1}}(t_{j+1}-s)^{2p}\ee\left[|Y_s-Y_{t_j}|^{2p}(|Y_s|^{4p}+|Y_{t_j}|^{4p})\ind_{\{|Y_{t_j}|<N^{-\alpha_{l+1}}\}}\right]ds\\
&\leq \frac{C}{N^{3p-1}}\int_{t_j}^{t_{j+1}}\ee^{\frac{\xi}{1+\xi}}\left[\left|Y_s-Y_{t_j}\right|^{\frac{2p(1+\xi)}{\xi}}\right]\ee^{\frac{1}{1+\xi}}\left[(|Y_s|^{4p(1+\xi)}+|Y_{t_j}|^{4p(1+\xi)})\ind_{\{|Y_{t_j}|<N^{-\alpha_{l+1}}\}}\right]ds\\
&\leq \frac{C}{N^{3p-1}}\int_{t_j}^{t_{j+1}}\frac{C}{N^p}\times \frac{C\pp^{\frac{1}{1+\xi}}\left(|Y_{t_j}|<N^{-\alpha_{l+1}}\right)}{N^{4p\alpha_{l+1}}}ds\leq C\frac{\pp^{\frac{1}{1+\xi}}\left(|Y_{t_j}|<N^{-\alpha_{l+1}}\right)}{N^{4p(1+\alpha_{l+1})}}.
\end{align*}
In the Stein and Stein model, since $\psi'(y)=2y$, one can only take advantage of the indicator function in the probability in the numerator and the power of $N$ in the denominator is reduced to $4p$. In both models, the same bound with $\xi=0$ can be derived for $\ee\left[\left|\int_{t_{j}}^{t_{j+1}}
(t_{j+1}-s)((b\psi'+\frac{\nu^2}{2}\psi'')(Y_s))ds\right|^{2p}\ind_{\{|Y_{t_j}|<N^{-\alpha_{l+1}}\}}\right]$ and one concludes that
$$\ee\left[\left|A_j-D_j\right|^{2p}\ind_{\{|Y_{t_j}|<N^{-\alpha_{l+1}}\}}\right]\leq\begin{cases}C\frac{\pp^{\frac{1}{1+\xi}}\left(|Y_{t_j}|<N^{-\alpha_{l+1}}\right)}{N^{2p}}\mbox{ in the Stein and Stein model}\\C\frac{\pp^{\frac{1}{1+\xi}}\left(|Y_{t_j}|<N^{-\alpha_{l+1}}\right)}{N^{2p(1+2\alpha_{l+1})}}\mbox{ in the quadratic Gaussian model}\end{cases}.$$
Plugging the
three estimations together with \eqref{contprobzero} in \eqref{decompsteinstein}, one deduces that in the Stein and Stein model, when $y_0\neq 0$,
$$\ee\left[\left|\sqrt{A_j}-\sqrt{D_j}\right|^{2p}\right]\leq C\left(N^{-(2p+1)\alpha_1}+\sum_{l=1}^{L-1} N^{-2p(1-\alpha_l)-\frac{\alpha_{l+1}}{1+\xi}}+N^{-2p(1-\alpha_L)}\right).$$
Now we may suppose that $\varepsilon<\frac{1}{2}$ since the smaller
$\varepsilon$ is the stronger the statement of the proposition is.
We choose $\alpha_l=\frac{1}{2}-\frac{l\varepsilon}{2p+1}$ for
$l\in\{1,\hdots,L-1\}$ with
$L=\lceil\frac{(2p+1)(1-2\varepsilon)}{4p\varepsilon}\rceil$ and
$\alpha_L=\frac{2p-1+2\varepsilon}{4p}$ and
$\xi=\frac{2\varepsilon}{1-2\varepsilon}$. Then
$(2p+1)\alpha_1=2p(1-\alpha_L)=\frac{2p+1}{2}-\varepsilon$ and for
$l\in\{1,\hdots,L-1\}$,
$2p(1-\alpha_l)+\frac{\alpha_{l+1}}{1+\xi}\geq\frac{2p+1}{2}+(2pl-\frac{l+1}{1+\xi})\frac{\varepsilon}{2p+1}-\frac{\xi}{2(1+\xi)}\geq
\frac{2p+1}{2}+0-\varepsilon$.

In the quadratic Gaussian model, plugging the
three estimations in \eqref{decompsteinstein}, one obtains that
\begin{equation}
   \ee\left[\left|\sqrt{A_j}-\sqrt{D_j}\right|^{2p}\right]\leq C\left(N^{-(4p+\ind_{\{y_0\neq 0\}})\alpha_1}+\sum_{l=1}^{L-1} N^{-2p(1+2(\alpha_{l+1}-\alpha_l))-\ind_{\{y_0\neq 0\}}\frac{\alpha_{l+1}}{1+\xi}}+N^{-2p(1-2\alpha_L)}\right).\label{majoquadgaus}
\end{equation}
We choose $\alpha_l=\frac{2p-l\varepsilon}{4p}$ for $l\in\{1,\hdots,L-1\}$ with
$L=\lceil\frac{2p}{\varepsilon}\rceil-1$ and $\alpha_L=\frac{\varepsilon}{4p}$.
Then $4p\alpha_1=2p(1-2\alpha_L)=2p-\varepsilon=2p(1+2(\alpha_{l+1}-\alpha_l))$ for $l\in\{1,\hdots,L-2\}$ and
$2p(1+2(\alpha_{L}-\alpha_{L-1}))=\left(\lceil\frac{2p}{\varepsilon}\rceil-1\right)\varepsilon\geq 2p-\varepsilon$.
\end{proof}

\begin{remark}
\label{3demi}
\begin{itemize}
\item In the quadratic Gaussian model, when $y_0\neq 0$, choosing $\xi=1$, $\alpha_l=\frac{2p}{4p+1}\left(\frac{8p}{8p+1}\right)^{l-1}$
for $l\in\{1,\hdots,L\}$ with $L=\lceil\frac{2\log\log
N}{\log\frac{8p+1}{8p}}\rceil$ in \eqref{majoquadgaus}, one obtains
that $\ee\left[\max_{0\leq k\leq N}
\Big|\widetilde{X}_{t_k}-\widetilde{X}^N_{t_k}\Big|^{2p}\right] \leq
\frac{C\log\log N}{N^{2p}}$ .
\item The fact that we can simulate exactly the volatility process without
    affecting the order of convergence of the scheme is yet another
    advantage of our approach over the Cruzeiro \textit{et al.}
    \cite{CruzeiroMalliavinThalmaier} scheme. On the other hand, the Kahl
    and Jäckel \cite{KahlJackel} scheme allows the exact simulation of
    $(Y_t)_{t\in[0,T]}$. Applied to the SDE (\ref{SV2}), it writes as
\begin{equation}
\label{IJKscheme}
\begin{array}{l}
\ds
X^{IJK}_{t_{k+1}}=X^{IJK}_{t_{k}}+\left(r-\frac{f^2(Y_{t_{k+1}})+f^2(Y_{t_{k}})}{4}\right)\delta_N
+\rho f(Y_{t_k})\Delta
W_{k+1}\quad\quad\quad\quad\\[2mm]
\ds
\quad\quad\quad\quad\quad\quad+\sqrt{1-\rho^2}\frac{f(Y_{t_{k+1}})+f(Y_{t_{k}})}{2}
\Delta B_{k+1}+\frac{\rho \nu}{2} f'(Y_{t_k})\left(\left(\Delta
W_{k+1}\right)^2-\delta_N\right).\end{array}\end{equation} Note that it is
close to our scheme insofar as it takes advantage of the structure of the
SDE (for example, unlike the Cruzeiro \textit{et al.}
\cite{CruzeiroMalliavinThalmaier} scheme, it allows the use of the coupling
introduced in Remark \ref{MLMC}). The main difference, which explains why
our scheme has better weak trajectorial convergence order, is that we
discretize more accurately the integral of $f(Y_t)$ with respect to the
Brownian motion $(B_t)_{t\in[0,T]}$. If, instead of a trapezoidal method,
one uses the same discretization as for the WeakTraj\_1 scheme, then it can
be shown that this modified IJK scheme will exhibit a first order weak
trajectorial convergence.

\item It is possible to improve the convergence at fixed times up to the
    order $\frac{3}{2}$. Following Lapeyre and Temam \cite{LapeyreTemam}
    who approximate an integral of the form $\int_{t_k}^{t_{k+1}} g(Y_s)ds$
    for a twice differentiable function $g$ by $\delta_N g(Y_{t_k})+\nu
    g'(Y_{t_k}) \int_{t_k}^{t_{k+1}} (W_s-W_{t_k})
    ds+(\kappa(\theta-Y_{t_k})g'(Y_{t_k})+\frac{\nu^2}{2}g''(Y_{t_k}))\frac{\delta_N^2}{2}$,
    we obtain the following scheme

\vspace{2mm}

\framebox{\textbf{OU\_Improved scheme}}
\begin{equation}
\begin{array}{l} \ds
\widetilde{X}_{t_{k+1}}^N=\widetilde{X}_{t_{k}}^N+\rho
\left(F(Y_{t_{k+1}})-F(Y_{t_{k}})\right)+
\widetilde{h}_k+\sqrt{1-\rho^2}
\sqrt{\widetilde{\psi}_k}\,\,\Delta B_{k+1},\\[4mm]
\end{array}
\label{strong_Y-OU_3demi}
\end{equation}
where $\widetilde{h}_k=\delta_N h(Y_{t_k})+\nu h'(Y_{t_k})
\int_{t_k}^{t_{k+1}}(W_s-W_{t_k})
ds+(\kappa(\theta-Y_{t_k})h'(Y_{t_k})+\frac{\nu^2}{2}h''(Y_{t_k}))\frac{\delta_N^2}{2}$
and \\$\widetilde{\psi}_k=\left(\psi(Y_{t_k})+\frac{\nu
\psi'(Y_{t_k})}{\delta_N}\int_{t_k}^{t_{k+1}}
(W_s-W_{t_k})ds+(\kappa(\theta-Y_{t_k})\psi'(Y_{t_k})+\frac{\nu^2}{2}\psi''(Y_{t_k}))\frac{\delta_N}{2}\right)
\vee \underline{\psi}.$

Mimicking the proof of Theorem \ref{THfort}, one can show that
\[\max_{0\leq k\leq N}
\ee\left[\left|\widehat{X}_{t_k}-\widehat{X}_{t_{k+1}}^N\right|^2\right]=\mathcal{O}\left(N^{-3}\right),\]
where $\widehat{X}_{t_k}$ and $\widehat{X}_{t_{k+1}}^N$ have
respectively the same distribution as $X_{t_k}$ and
$\widetilde{X}^N_{t_k}$ :
\[\widehat{X}_{t_k}=X_0+\rho(F(Y_{t_k})-F(y_0))+\int_0^{t_k}
h(Y_s)ds+\sqrt{1-\rho^2}\sqrt{\frac{1}{t_k}\int_0^{t_k}
\psi(Y_s)ds}\,\,B_{t_k}\] and
\[\widehat{X}_{t_{k}}^N=X_0+\rho
\left(F(Y_{t_{k}})-F(y_0)\right)+
\sum_{j=0}^{k-1}\widetilde{h}_j+\sqrt{1-\rho^2}
\sqrt{\frac{\delta_N}{t_k}\sum_{j=0}^{k-1}\widetilde{\psi}_j}\,\,B_{t_k}.\]
\end{itemize}
\end{remark}

As for the stock, we can prove the same convergence result under
some additional assumptions which are more explicit than assumption
($\mathcal{H}$\ref{hypStock}) of Corollary \ref{CorolStock}. To do
so, let us make the following changes in our scheme so that we can
control its exponential moments :
\begin{equation}
\label{strong_modif} \begin{array}{l} \ds
\widetilde{X}_{t_{k+1}}^N=\widetilde{X}_{t_{k}}^N+\rho
\left(F(Y_{t_{k+1}})-F(Y_{t_{k}})\right)+ \delta_N h(Y_{t_k})\\[4mm]
\ds \quad \quad +\sqrt{1-\rho^2}\sqrt{\left(\psi(Y_{t_k})+\frac{\nu
\psi'(Y_{t_k})}{\delta_N}\int_{t_k}^{t_{k+1}} (W_s-W_{t_k})ds\right)
\wedge \widehat{\psi}(Y_{t_k}) \vee \underline{\psi}}\,\, \Delta
B_{k+1}.
\end{array}
\end{equation}

\begin{proposition}
Suppose that $Y$ is solution of (\ref{OU}) and that the scheme is
defined by (\ref{strong_modif}). \\Under the assumptions
($\mathcal{H}$\ref{fC3}), ($\mathcal{H}$\ref{hypTh1_YOU}) and
($\mathcal{H}$\ref{hypOU}) of Theorem \ref{THfort_Y-OU} and if
\begin{hypo}
\label{hypStock_YOU} there exists $\beta\in(0,1)$ and $K>0$ such
that $\forall y \in \rr$
\[\begin{array}{rcl}
\ds |h(y)|+ |F(y)|&\leq&\ds K(1+|y|^{1+\beta})\\[3mm]
\ds |f(y)|&\leq&\ds K(1+|y|^{\beta})\\[3mm]
|f'(y)|&\leq&K |f(y)|\end{array}\]
\end{hypo}
then, $\forall p \geq 1$, there exists a positive constant $C$
independent of $N$ such that
\[\ee\left[\max_{0\leq k\leq N}
\left|e^{\widetilde{X}_{t_k}}-e^{\widetilde{X}^N_{t_k}}\right|^{2p}\right]
\leq \frac{C}{N^{2p}}.\] The same result holds true if one replaces
assumption ($\mathcal{H}$\ref{hypOU}) by assumption
($\mathcal{H}$\ref{spos}).
\end{proposition}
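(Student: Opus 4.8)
The plan is to follow the scheme of proof of Corollary~\ref{CorolStock}, reducing the statement to two facts: \emph{(i)} the scheme \eqref{strong_modif}, which differs from \eqref{strong_Y-OU} only through the extra upper cut-off $\wedge\,\widehat{\psi}(Y_{t_k})$, still satisfies the conclusion of Theorem~\ref{THfort_Y-OU} for every exponent, i.e. $\ee\big[\max_{0\le k\le N}|\widetilde{X}_{t_k}-\widetilde{X}^N_{t_k}|^{2q}\big]\le C_q\,N^{-2q}$ with $C_q$ independent of $N$; and \emph{(ii)} there is $\epsilon>0$ for which $\ee\big[\max_{0\le k\le N}S_{t_k}^{2p+\epsilon}\big]$ and $\ee\big[\max_{0\le k\le N}e^{(2p+\epsilon)\widetilde{X}^N_{t_k}}\big]$ are bounded uniformly in $N$. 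Granting \emph{(i)} and \emph{(ii)}, one fixes $\epsilon$ and runs the chain of inequalities in the proof of Corollary~\ref{CorolStock} with $\widetilde{X}^N$ now given by \eqref{strong_modif} (that chain is pure H\"older and does not depend on the scheme): its last factor is $\ee\big[\max_k|\widetilde{X}_{t_k}-\widetilde{X}^N_{t_k}|^{(2p\epsilon+4p^2)/\epsilon}\big]^{\epsilon/(2p+\epsilon)}$, which by \emph{(i)} applied with $2q=(2p\epsilon+4p^2)/\epsilon$ is $O(N^{-2p})$ since $2q\cdot\frac{\epsilon}{2p+\epsilon}=2p$, while the remaining factor is bounded uniformly in $N$ by \emph{(ii)}; this yields the announced $C\,N^{-2p}$ with $C$ independent of $N$.

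For \emph{(i)} I would re-run the proof of Theorem~\ref{THfort_Y-OU} verbatim, the only change being that, in the notation of that proof, $D_j$ now equals $\big(\psi(Y_{t_j})+\frac{\nu\psi'(Y_{t_j})}{\delta_N}\int_{t_j}^{t_{j+1}}(W_s-W_{t_j})\,ds\big)\wedge\widehat{\psi}(Y_{t_j})\vee\underline{\psi}$. One may assume $\overline{\psi}=\infty$ (otherwise $D_j\le\overline{\psi}$ is bounded and the whole estimate is immediate), so that $\widehat{\psi}(y)=\frac{3}{2}\psi(y)$. On the event where the un-clipped quantity $Q_j$ does not exceed $\frac{3}{2}\psi(Y_{t_j})$, $D_j$ coincides with the random variable already handled there, so nothing changes; on the complementary event one bounds $|A_j-D_j|\le|A_j-Q_j|+Q_j$, a quantity whose $L^r$-norms grow only polynomially in moments of $\sup_{t\le T}|Y_t|$ (finite for the Ornstein--Uhlenbeck process), whereas $\pp\big(Q_j>\frac{3}{2}\psi(Y_{t_j})\big)\subseteq\pp\big(\frac{\nu\psi'(Y_{t_j})}{\delta_N}\int_{t_j}^{t_{j+1}}(W_s-W_{t_j})\,ds>\frac{1}{2}\psi(Y_{t_j})\big)$ is, using $|\psi'|\le C\psi$ and $\int_{t_j}^{t_{j+1}}(W_s-W_{t_j})\,ds\sim\mathcal{N}(0,\delta_N^3/3)$, bounded by $\pp(|G|\ge c\,\delta_N^{-1/2})\le e^{-c'/\delta_N}$, exactly as at the end of the proof of Theorem~\ref{THfort_Y-OU}. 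A Cauchy--Schwarz split then makes the extra terms produced by the cut-off $o(N^{-\alpha})$ for every $\alpha$; likewise the ``small $D_j$'' event $\{D_j\le\psi(Y_{t_j})/2\}$ still sits inside $\{Q_j\le\psi(Y_{t_j})/2\}$ because $\frac{3}{2}\psi(y)>\frac{1}{2}\psi(y)$, so that part of the proof is unchanged. The same reasoning gives \emph{(i)} when $(\mathcal{H}$\ref{hypOU}$)$ is replaced by $(\mathcal{H}$\ref{spos}$)$ together with $|\psi'|\le C\psi$, which is precisely what makes the tail estimate above work.

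For \emph{(ii)}, the bound on $\max_k S_{t_k}^{2p+\epsilon}=\max_k e^{(2p+\epsilon)X_{t_k}}$ comes from the dynamics \eqref{SV3}: by $(\mathcal{H}$\ref{hypStock_YOU}$)$ the $F$- and drift-contributions are dominated, after exponentiation, by $e^{C(1+\sup_{t\le T}|Y_t|^{1+\beta})}$, which is integrable by Lemma~\ref{MomentExpo} since $\beta<1$, and the martingale part $\sqrt{1-\rho^2}\int_0^{\cdot}f(Y_s)\,dB_s$ is controlled by the standard exponential maximal inequality, valid because its bracket $\int_0^T f^2(Y_s)\,ds\le C(1+\sup_{t\le T}|Y_t|^{2\beta})$ has exponential moments of all orders ($2\beta<2$, Lemma~\ref{MomentExpo}). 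For $\max_k e^{(2p+\epsilon)\widetilde{X}^N_{t_k}}$ the role of the cut-off appears: $D_j\le\widehat{\psi}(Y_{t_j})\vee\underline{\psi}$, hence $\sqrt{D_j}\le C(1+|f(Y_{t_j})|)\le C(1+|Y_{t_j}|^{\beta})$ by $(\mathcal{H}$\ref{hypStock_YOU}$)$ and the definition of $\widehat{\psi}$; writing $\widetilde{X}^N_{t_k}$ as the sum in \eqref{strong_modif}, its part not involving $B$ is again bounded after exponentiation by $e^{C(1+\max_{0\le j\le N}|Y_{t_j}|^{1+\beta})}$, and, conditionally on $(W_t)_{t\le T}$, $\sum_{j<k}\sqrt{D_j}\,\Delta B_{j+1}$ is a time-changed Brownian motion stopped at $\sum_{j<N}D_j\,\delta_N\le CT\,(1+\sup_{t\le T}|Y_t|^{2\beta})$, so by the reflection principle and the Gaussian moment-generating function $\ee\big[\max_k e^{a\sum_{j<k}\sqrt{D_j}\Delta B_{j+1}}\,\big|\,(W_t)\big]\le 2\,e^{C a^2(1+\sup_{t\le T}|Y_t|^{2\beta})}$, whose expectation is finite and independent of $N$ by Lemma~\ref{MomentExpo}. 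Combining these with H\"older gives the uniform bound on $\ee[\max_k e^{(2p+\epsilon)\widetilde{X}^N_{t_k}}]$; the variant with $(\mathcal{H}$\ref{spos}$)$ and $|\psi'|\le C\psi$ is identical, $(\mathcal{H}$\ref{spos}$)$ only simplifying the lower bounds.

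The step I expect to be the main obstacle is \emph{(ii)}, namely producing exponential moments of the scheme that are \emph{uniform in $N$}. This is exactly the reason for the cut-off at $\widehat{\psi}$ in \eqref{strong_modif}: it keeps the diffusion coefficient of the scheme below $\sqrt{3/2}\,|f(Y_{t_k})|$, which grows only like $|Y_{t_k}|^{\beta}$ with $\beta<1$, so that all the quadratic-variation bounds above involve only $\sup_{t\le T}|Y_t|^{2\beta}$ with $2\beta<2$ and hence have finite exponential moments of every order by Lemma~\ref{MomentExpo}.
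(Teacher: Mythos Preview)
Your proposal is correct and follows essentially the same approach as the paper's proof: reduce to Corollary~\ref{CorolStock}, then verify separately that the extra upper cut-off does not spoil the $O(N^{-2q})$ estimate (via the same Gaussian tail argument for $\pp\big(Q_j>\tfrac{3}{2}\psi(Y_{t_j})\big)$ used at the end of Theorem~\ref{THfort_Y-OU}) and that assumption $(\mathcal{H}\ref{hypStock})$ holds thanks to $(\mathcal{H}\ref{hypStock_YOU})$ and Lemma~\ref{MomentExpo}.

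Two small technical differences are worth noting. For $\ee\big[\max_k S_{t_k}^{2p+\epsilon}\big]$, the paper starts from the SDE for $S$ itself, applies a BDG-type estimate, and then computes $\ee(S_t^{8p})$ via the conditional Gaussian law of $X_t$ given $(Y_s)_{s\le T}$; your route through the representation~\eqref{SV3} for $X$ plus an exponential maximal inequality for $\int_0^\cdot f(Y_s)\,dB_s$ is equally valid and arguably more direct. For $\ee\big[\max_k e^{(2p+\epsilon)\widetilde{X}^N_{t_k}}\big]$, the paper controls the $B$-part by Doob's $L^2$ maximal inequality applied to the positive submartingale $\big(e^{c\sum_{j<k}\sqrt{D_j}\,\Delta B_{j+1}}\big)_k$ (conditionally on $W$), which is the clean way to justify what you phrase as ``time-changed Brownian motion plus reflection principle'' for the discrete sums; your heuristic is sound but Doob is the rigorous tool here. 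Both differences are cosmetic---the key point, which you correctly identify, is that the upper cut-off forces $D_j\le\widehat{\psi}(Y_{t_j})\le C(1+|Y_{t_j}|^{2\beta})$ with $2\beta<2$, so Lemma~\ref{MomentExpo} closes all the exponential-moment estimates uniformly in $N$.
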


\begin{proof} We go over the proof of Corollary
\ref{CorolStock}. The fact that $\ee\left[\max_{0\leq k\leq N}
\left|\widetilde{X}_{t_k}-\widetilde{X}^N_{t_k}\right|^{4p}\right]=\mathcal{O}(\frac{1}{N^{4p}})$
is not a straightforward consequence of Theorem \ref{THfort_Y-OU}
anymore because we have introduced some changes in our scheme.
However, looking through the proof of the theorem, one can see that
it is enough to prove the following inequality : $\forall j \in
\{0,\dots, N-1\}$
\begin{equation}
\label{cutoff} \ee\left[\left|\sqrt{\frac{1}{\delta_{N}}
\int_{t_{j}}^{t_{j+1}}
   \psi(Y_{s})ds}-\sqrt{\left(\psi(Y_{t_j})+\frac{\nu
\psi'(Y_{t_j})}{\delta_{N}}\int_{t_j}^{t_{j+1}}
(W_s-W_{t_j})ds\right)\wedge \widehat{\psi}(Y_{t_j})\vee
\underline{\psi}}\right|^{2p}\right] \leq
\frac{C}{N^{2p}}.\end{equation} When $\overline{\psi}$ is finite,
since $\frac{1}{\delta_N}\int_{t_j}^{t_{j+1}} \psi(Y_s)ds$ is
smaller than $\widehat{\psi}(Y_{t_j})=\overline{\psi}$, the
expectation is not greater than the similar one without the new
cut-off and \eqref{cutoff} holds by the proof of Theorem
\ref{THfort_Y-OU}. When $\overline{\psi}=+\infty$, for $\epsilon>0$,
the expectation of interest is smaller than \begin{align*}
  &\ee\left[\left|\sqrt{\frac{1}{\delta_{N}}
\int_{t_{j}}^{t_{j+1}}
   \psi(Y_{s})ds}-\sqrt{\left(\psi(Y_{t_j})+\frac{\nu
\psi'(Y_{t_j})}{\delta_{N}}\int_{t_j}^{t_{j+1}}
(W_s-W_{t_j})ds\right)\vee
\underline{\psi}}\right|^{2p}\right]\\
&+\ee^{\frac{1}{1+\epsilon}}\left[\left(\frac{1}{\delta_{N}}
\int_{t_{j}}^{t_{j+1}}
   \psi(Y_{s})^pds+(\frac{3}{2}\psi(Y_{t_j}))^p\right)^{1+\epsilon}\right]\pp^{\frac{\epsilon}{1+\epsilon}}\left[\psi(Y_{t_j})+\frac{\nu
\psi'(Y_{t_j})}{\delta_{N}}\int_{t_j}^{t_{j+1}}
(W_s-W_{t_j})ds\geq\frac{3}{2}\psi(Y_{t_j})\right].
\end{align*}
We estimate the first term like in the proof of Theorem
\ref{THfort_Y-OU}. The expectation in the second term is bounded
uniformly in $N$. By hypothesis ($\mathcal{H}$\ref{hypStock_YOU}),
$\exists K<+\infty,\;\forall y\in\rr,\;|\psi'(y)|\leq K\psi(y)$ and,
for $G$ normally distributed, the probability in this second term is
smaller than $\pp\left(|G|\geq \frac{C}{\sqrt{\delta_N}}\right) $
which decreases quicker than polynomially to $0$ as $N\to\infty$.
Therefore \eqref{cutoff} holds.

\vspace{1mm}

Now, what is left to prove is that assumption
($\mathcal{H}$\ref{hypStock}) is satisfied. On the one hand, we have
that
\[\begin{array}{rcl}
\ds \ee\left[\max_{0\leq k\leq N} S_{t_k}^{4 p}\right] &=&\ds
\ee\left[\max_{0\leq k\leq N} \left(S_0+\int_0^{t_k} rS_sds+\int_0^{t_k} f(Y_s)S_s \left(\rho dW_s+\sqrt{1-\rho^2}dB_s\right)\right)^{4p}\right]\\[5mm]
&\leq&\ds C \left(1+\int_0^T\ee\left(S_t^{4p}(1+f^{4p}(Y_t))\right)dt\right)\\[5mm]
&\leq&\ds C \left(1+\int_0^T\sqrt{\ee(S_t^{8p})} \sqrt{\ee\left((1+f^{4p}(Y_t))^2\right)}dt\right).\\[5mm]
\end{array}\]
Thanks to assumption ($\mathcal{H}$\ref{hypStock_YOU}) and Lemma
\ref{MomentExpo}, there exists $C>0$ such that
$\sqrt{\ee\left((1+f^{4p}(Y_t))^2\right)} \leq C$. Observe that
conditionally on $(Y_t)_{t\in[0,T]}$,
\begin{equation}
X_t \sim \mathcal{N}\left(\log(s_0)+\rho (F(Y_t)-F(y_0))+\int_0^t
h(Y_s)ds\,,\,(1-\rho^2) \int_0^t f^2(Y_s)ds\right),
\end{equation}
so, by Jensen's inequality and assumption
($\mathcal{H}$\ref{hypStock_YOU})
\[\begin{array}{rcl}
\ds \ee\left(S_t^{8p}\right)&=&\ds
\ee\left(e^{8p\left(\log(s_0)+\rho (F(Y_t)-F(y_0))+\int_0^t
h(Y_s)ds\right)} e^{32p^2 (1-\rho^2) \int_0^t f^2(Y_s)ds}\right)\\[3mm]
&\leq&\ds \ee\left(e^{8p\left(\log(s_0)+\rho
(F(Y_t)-F(y_0))\right)} \frac{1}{t}\int_0^te^{t\left(8p h(Y_s)+32p^2(1-\rho^2)f^2(Y_s)\right)}ds\right)\\[3mm]
&\leq&\ds C \ee\left(e^{C \sup_{0\leq t\leq
T}|Y_t|^{1+\beta}}\right).
\end{array}\] Using Lemma \ref{MomentExpo}, we deduce that
$\ee\left[\max_{0\leq k\leq N} S_{t_k}^{4 p}\right] <\infty$.

On the other hand, using Cauchy-Schwarz inequality, we have that
\[\begin{array}{rcl}
\ds \ee\left[\max_{0\leq k\leq N} e^{4
p\widetilde{X}^N_{t_k}}\right] &=&\ds \ee\left[\max_{0\leq k\leq N}
\exp\left(4 p
\left(X_0+\rho(F(Y_{t_k})-F(y_0))+\sum_{j=0}^{k-1}\delta_N
h(Y_{t_j})+\sum_{j=0}^{k-1}\sqrt{1-\rho^2}\right.\right.\right.\\[5mm]
&&\ds \left.\left.\left.\times\sqrt{\left(\psi(Y_{t_j})+\frac{\nu
\psi'(Y_{t_j})}{\delta_N}\int_{t_j}^{t_{j+1}}(W_s-W_{t_j})ds\right)\wedge\widehat{\psi}(Y_{t_j})\vee\underline{\psi}}
\, \Delta B_{j+1}\right)\right)\right]
\\[5mm]
&\leq& \sqrt{\widetilde{E}^N_1}\sqrt{\widetilde{E}^N_2},
\end{array}\]
where \[\widetilde{E}^N_1=\ee\left[\max_{0\leq k\leq
N}e^{8p\left(X_0+\rho(F(Y_{t_k})-F(y_0))+\sum_{j=0}^{k-1}\delta_N
h(Y_{t_j})\right)}\right]
\]and\[\widetilde{E}^N_2=\ee\left[\max_{0\leq k\leq
N}e^{8p\sqrt{1-\rho^2}\sum_{j=0}^{k-1}\sqrt{\left(\psi(Y_{t_j})+\frac{\nu
\psi'(Y_{t_j})}{\delta_N}\int_{t_j}^{t_{j+1}}(W_s-W_{t_j})ds\right)\wedge\widehat{\psi}(Y_{t_j})\vee\underline{\psi}}
\, \Delta B_{j+1}}\right].\] Using the same argument as before, we
show that $\widetilde{E}^N_1 \leq C \ee\left(e^{C \sup_{0\leq t\leq
T}|Y_t|^{1+\beta}}\right)<\infty.$

Denote by $D_j=\left(\psi(Y_{t_j})+\frac{\sigma
\psi'(Y_{t_j})}{\delta_{N}}\int_{t_j}^{t_{j+1}} (W_s-W_{t_j})ds\right)\wedge
\widehat{\psi}(Y_{t_j})\vee \underline{\psi}$. Using Doob's maximal inequality
for the positive submartingale $\left(e^{4p\sqrt{1-\rho^2} \sum_{j=0}^{k-1}
\sqrt{D_j} \Delta B_{j+1}}\right)_{0\leq k \leq N}$ (see Theorem 3.8 p. 13 of
Karatzas and Shreve \cite{KaratzasShreve} for example), we also have that
\[\begin{array}{rcl}
\ds \widetilde{E}^N_2&\leq&\ds 4 \ee\left(
e^{8p\sqrt{1-\rho^2}\sum_{j=0}^{N-1}
\sqrt{D_j} \, \Delta B_{j+1}}\right)\\[5mm]
&=&\ds 4 \ee\left(\prod_{j=0}^{N-1}
e^{32p^2\delta_N(1-\rho^2)D_j}\right)\\[5mm]
&\leq&\ds 4 \ee\left(\max_{0\leq k\leq N-1}e^{32p^2(1-\rho^2)\widehat{\psi}(Y_{t_j})}\right).\\[5mm]
\end{array}
\]
By virtue of assumption ($\mathcal{H}$\ref{hypStock_YOU}),
$\widetilde{E}^N_2 < \infty$ which concludes the proof.
\end{proof}

\section{A second order weak scheme}

Integrating the first stochastic differential equation in
(\ref{SV3}) gives
\begin{equation}
\label{X} X_t=\log(s_0)+\rho (F(Y_t)-F(y_0))+\int_0^t
h(Y_s)ds+\sqrt{1-\rho^2} \int_0^t f(Y_s) dB_s.
\end{equation}

We are only left with an integral with respect to time which can be
handled by the use of a trapezoidal scheme and a stochastic integral
where the integrand is independent of the Brownian motion. Hence,
conditionally on $(Y_t)_{t\in[0,T]}$,
\begin{equation}
X_T \sim \mathcal{N}\left(\log(s_0)+\rho
(F(Y_T)-F(y_0))+m_T\,,\,(1-\rho^2) v_T\right),
\end{equation}
where $m_T=\int_0^T h(Y_s)ds$ and $v_T=\int_0^T f^2(Y_s)ds$. This
suggests that, in order to properly approximate the law of $X_T$,
one should accurately approximate the law of $Y_T$ and carefully
handle integrals with respect to time of functions of the process
$(Y_t)_{t\in[0,T]}$. We thus define our weak scheme as follows

\vspace{4mm}

\framebox{\textbf{Weak\_2 scheme}}
\begin{equation}
\label{Weak} \overline{X}^N_T=\log(s_0)+\rho
(F(\overline{Y}^N_{T})-F(y_0))+\overline{m}^N_T+\sqrt{(1-\rho^2)
\overline{v}^N_T} G
\end{equation}
where
$\overline{m}^N_T=\delta_N\sum_{k=0}^{N-1}\frac{h(\overline{Y}^N_{t_k})+h(\overline{Y}^N_{t_{k+1}})}{2}$,
$\overline{v}_T^N=\delta_N\sum_{k=0}^{N-1}\frac{f^2(\overline{Y}^N_{t_k})+f^2(\overline{Y}^N_{t_{k+1}})}{2}$,
$(\overline{Y}^N_{t_k})_{0\leq k\leq N}$ is the Ninomiya-Victoir
scheme of $(Y_t)_{t\in[0,T]}$ and $G$ is an independent centered
reduced Gaussian random variable. Note that, conditionally on
$(\overline{Y}^N_{t_k})_{0\leq k\leq N}, \overline{X}^N_t$ is also a
Gaussian random variable with mean $\log(s_0)+\rho
(F(\overline{Y}^N_{T})-F(y_0))+\overline{m}^N_T$ and variance
$(1-\rho^2)\overline{v}^N_T$.

\vspace{2mm}

It is well known that the Ninomiya and Victoir \cite{NinomiyaVictoir} scheme is
of weak order two. For the sake of completeness, we give its definition in our
setting :
\[\left\{\begin{array}{l}
\overline{Y}^N_0=y_0\\
\forall 0\leq k\leq N-1,
\overline{Y}^N_{t_{k+1}}=\exp\left(\frac{T}{2N}V_0\right)\exp\left((W_{t_{k+1}}-W_{t_k})V\right)\exp\left(\frac{T}{2N}V_0\right)(\overline{Y}^N_{t_k}),
\end{array}\right.\]
where $V_0: x\mapsto b(x)-\frac{1}{2}\sigma \sigma'(x)$ and $V: x\mapsto
\sigma(x)$. The notation $\exp(tV)(x)$ stands for the solution, at time $t$ and
starting from $x$, of the ODE $\eta'(t)=V(\eta(t))$. What is nice with our
setting is that we are in dimension one and thus such ODEs can be solved
explicitly. Indeed, if  $\zeta$ is a primitive of $\frac{1}{V}$ :
$\zeta(t)=\int_0^t\frac{1}{V(s)}ds$, then the solution writes as
$\eta(t)=\zeta^{-1}\left(t+\zeta(x)\right)$.\\ Note that our scheme can be seen
as a splitting scheme for the SDE satisfied by $(Z_t=X_t-\rho F(Y_t),Y_t)$ :
\begin{equation}
\label{SV4} \left\{\begin{array}{rcl} dZ_t&=&h(Y_t)dt+\sqrt{1-\rho^2}f(Y_t)dB_t\\[1mm]
dY_t&=&b(Y_t)dt+\sigma(Y_t)dW_t.
\end{array}\right.
\end{equation}
The differential operator associated to (\ref{SV4}) writes as
\[\mathcal{L}v(z,y)=h(y)\frac{\partial v}{\partial z}+b(y)\frac{\partial v}{\partial y}+\frac{\sigma^2(y)}{2}
\frac{\partial^2 v}{\partial y^2}+\frac{(1-\rho^2)}{2}f^2(y)\frac{\partial^2
v}{\partial z^2}=\mathcal{L}_Yv(z,y)+\mathcal{L}_Zv(z,y),\]where $\mathcal{L}_Y
v(z,y)=b(y)\frac{\partial v}{\partial y}+\frac{\sigma^2(y)}{2} \frac{\partial^2
v}{\partial y^2}$ and $\mathcal{L}_Zv(z,y)=h(y)\frac{\partial v}{\partial
z}+\frac{(1-\rho^2)}{2}f^2(y)\frac{\partial^2 v}{\partial z^2}$. One can check
that our scheme amounts to first integrate exactly $\mathcal{L}_Z$ over a half
time step then apply the Ninomiya-Victoir scheme to $\mathcal{L}_Y$ over a time
step and finally integrate exactly $\mathcal{L}_Z$ over a half time step.
According to results on splitting (see Alfonsi \cite{Alfonsi2} or Tanaka and
Kohatsu-Higa \cite{TanakaKohatsuHiga} for example) one expects this scheme to
exhibit second order weak convergence. Actually, according to Theorem 1.17 in
Alfonsi \cite{Alfonsi2}, our scheme has potential second order of weak
convergence. To deduce formally the order two of weak convergence, one only
needs to check regularity of the solution of the backward Kolmogorov equation
associated with the model.

\begin{remark}
\label{conditioning}
\begin{itemize}
%\item The theorem does not cover the case of perfectly correlated
%or uncorrelated stock and volatility which is not very interesting
%from a practical point of view.

\item As for plain vanilla options pricing, observe that, by the Romano and
    Touzi \cite{RomanoTouzi} formula,
\[\ee\left(e^{-rT} \alpha(S_T) |
(Y_t)_{t\in[0,T]}\right)=BS_{\alpha,T}\left(s_0e^{\rho(F(Y_T)-F(y_0))+m_T+(\frac{(1-\rho^2)v_T}{2T}-r)T},\frac{(1-\rho^2)v_T}{T}\right),\]
where $BS_{\alpha,T}(s,v)$ stands for the price of a European option with
pay-off $\alpha$ and  maturity $T$ in the \BS~model with initial stock
price $s$, volatility $\sqrt{v}$ and constant interest rate $r$. When, like
for a call or a put option, $BS_{\alpha,T}$ is available in a closed form,
one should approximate $\ee\left(e^{-rT} \alpha(S_T)\right)$ by
\[\frac{1}{M} \sum_{i=1}^M
BS_{\alpha,T}\left(s_0e^{\rho(F(\overline{Y}^{N,i}_T)-F(y_0))+\overline{m}^{N,i}_T+(\frac{(1-\rho^2)\overline{v}^{N,i}_T}{2T}-r)T},
\frac{(1-\rho^2)\overline{v}^{N,i}_T}{T}\right),\] where $M$ is the total
number of Monte Carlo samples and the index $i$ refers to independent
draws.

Indeed, the conditioning provides a variance reduction.
%We also note
%that what is most important is to have a scheme with a high order
%weak convergence on the triplet $(Y_t,m_t,v_t)_{t\in[0,T]}$ solution
%of the SDE (\ref{SDE3d}), which is the case for our scheme.

\item In the special case of an Ornstein-Uhlenbeck process driving the volatility (i.e $(Y_t)_{t\in[0,T]}$ is solution of
the SDE (\ref{OU})), one should replace the Ninomiya-Victoir scheme
by the true solution. The order two of weak convergence should then
be preserved. Moreover, one can check that the OU\_Improved scheme
(\ref{strong_Y-OU_3demi}) has also potential second order of weak
convergence. Better still, it achieves a weak trajectorial
convergence of order $\frac{3}{2}$ on the triplet
$(Y_t,m_t,v_t)_{t\in[0,T]}$ which allows for a significant
improvement of the multilevel Monte Carlo method, as we shall check
numerically.
\end{itemize}
\end{remark}

\section{Numerical comparative analysis of the proposed schemes with standard discretization methods}
We focus on the case where $(Y_t)_{t\in[0,T]}$ is an
Ornstein-Uhlenbeck process since, as mentioned in the introduction,
it encompasses several standard stochastic volatility models.

We are going to compare our schemes (WeakTraj\_1, Weak\_2 and OU\_Improved) to
the Euler scheme with exact simulation of the process $(Y_t)_{t\in[0,T]}$
driving the volatility (hereafter denoted Euler), the Kahl and Jäckel
\cite{KahlJackel} scheme (IJK) and the Cruzeiro \textit{et al.}
\cite{CruzeiroMalliavinThalmaier} scheme (CMT).

For the following numerical computations, unless otherwise stated, we are going
to consider Scott's model (\ref{Scott}). We use the same set of parameters as
in Kahl and Jäckel \cite{KahlJackel} : $S_0=100, r=0.05, T=1, \sigma_0=0.25,
y_0=0, \kappa=1,\theta=0,\nu=\frac{7\sqrt{2}}{20}, \rho=-0.2$ and $f:y\mapsto
\sigma_0e^y$.

\subsection{Theoretical computational cost per timestep}

The following table gives the computational cost per timestep for
each scheme in terms of function evaluations and random samples
needed. Except for the CMT scheme, we consider exact simulation of
$(Y_t)_{t\in[0,T]}$ which requires the simulation of one Gaussian
variable at each timestep. For the Weak2 scheme, no other simulation
per timestep is required since we only need to simulate one Gaussian
variable at the terminal time (see equation (\ref{Weak}). The IJK
scheme, as we can see according to (\ref{IJKscheme}), requires the
simulation of the two Brownian increments.

Certainly, our schemes require more computational effort per
timestep but we will see hereafter that their higher order of
convergence suffices to have better efficiency.

\begin{table}[h]
\begin{center}
\begin{tabular}{|c|c|c|}
\hline Schemes &Function evaluations&Random number samples\\\hline
WeakTraj\_1&3 ($f, f'$ and $F$)&3 Gaussian simulations \\
\hline Weak\_2 &2 ($f$ and $f'$)& 1 Gaussian simulation \\
\hline OU\_Improved &5 ($f, f', f'', f'''$ and $F$)&3 Gaussian simulations\\
\hline IJK &2 ($f$ and $f'$)&3 Gaussian simulations\\
\hline CMT &2 ($f$ and $f'$)&2 Gaussian simulations\\
\hline Euler &1 ($f$)&2 Gaussian simulations\\
\hline
\end{tabular}
\end{center}
\vspace{-6mm}
\caption{\label{tab:CompEffort} Computation effort per
timestep}
\end{table}

\subsection{Numerical illustration of strong convergence properties}
In order to illustrate the strong convergence rate of a
discretization scheme $\widehat{X}^N$, we consider the squared
$L^2$-norm of the supremum of the difference between the scheme with
time step $\frac{T}{N}$ and the one with time step $\frac{T}{2N}$ :
\begin{equation}
\label{expreStrong} \ee\left[\max_{0\leq k \leq N}
\left|\widehat{X}^N_{t_k}-\widehat{X}^{2N}_{t_{k}}\right|^2\right].
\end{equation}

This quantity will exhibit the same asymptotic behavior with respect to $N$ as
the squared $L^2$-norm of the difference between the scheme with time step
$\frac{T}{N}$ and the limiting process towards which it converges (see Alfonsi
\cite{Alfonsi}).

In Figure \ref{fig:strongLogAsset}, we draw the logarithm of the
Monte Carlo estimation of (\ref{expreStrong}) as a function of the
logarithm of the number of time steps. The number of discretization
steps is a power of $2$ varying from $2$ to $256$ and the number of
Monte Carlo samples used is equal to $M=10 \,000$. We also consider
the strong convergence of the schemes on the asset itself (see
Figure \ref{fig:strongAsset}) by computing $ \ee\left[\max_{0\leq k
\leq N}
\left|e^{\widehat{X}^N_{t_k}}-e^{\widehat{X}^{2N}_{t_{k}}}\right|^2\right]
$.

The confidence intervals of the estimations are reported in error bars in the
figures : as one can see, the number of simulations considered suffices to have
precise results. The average width of the confidence intervals reported in
figures \ref{fig:strongLogAsset} and \ref{fig:strongAsset} is equal to 0.07.
Note that, since the width of the confidence interval in the estimation of
(\ref{expreStrong}) is proportional to the standard error which should
theoretically be proportional to $N$ too, then the width of the confidence
interval expressed in logarithmic scale should be constant. We can see that
this is indeed the case, especially when the number of time-steps is large
enough.

The slopes of the regression lines are reported in
Table~\ref{tab:slopes}. For completeness sake, we give the standard
deviation of the residuals in the regression. We see that, both for
the logarithm of the asset and for the asset itself, all the schemes
exhibit a strong convergence of order $\frac{1}{2}$. Our schemes
only have a better constant.

\begin{table}[h]
\begin{center}
\begin{tabular}{|c|c|c|c|c|c|c|}
\hline &WeakTraj\_1&Weak\_2&OU\_Improved&IJK&CMT&Euler\\\hline
Log-asset&-1.01 (0.06) &-0.88 (0.03)& -0.94 (0.04)&-0.92 (0.07)&-0.98 (0.02)&-0.84 (0.08)\\
\hline Asset&-1.01 (0.06) & -0.91 (0.05)&-0.95 (0.02)&-0.88 (0.08)&-0.95 (0.06)&-0.85 (0.09)\\
\hline
\end{tabular}
\end{center}

\vspace{-6mm}
\caption{\label{tab:slopes} Slopes of the regression lines
(Strong convergence)}
\end{table}
\vspace{-2mm}
\subsubsection{Weak trajectorial convergence}
Nevertheless, as explained in Remark \ref{MLMC}, for the scheme with time
step $\frac{1}{N}$, one can replace the increments of the Brownian motion
$(B_t)_{t\in[0,T]}$ by a sequence of Gaussian random variables smartly
constructed from the scheme with time step $\frac{1}{2N}$. This particular
coupling is possible whenever the independence structure between
$(B_t)_{t\in[0,T]}$ and $(Y_t)_{t\in[0,T]}$ is preserved by the
discretization of the latter process, which is the case for all the
schemes but the CMT scheme. So we carry out this coupling and we repeat
the preceding numerical experiment. The results are put together in
Figures \ref{fig:strongLogAsset_Coupling} and
\ref{fig:strongAsset_Coupling} and in Table \ref{tab:slopes_Coupling}. The
average width of the confidence intervals is equal to 0.09.
\begin{figure}[h]
  \begin{minipage}[c]{.46\linewidth}
\hspace{-6mm}\includegraphics[scale=0.38]{SansCouplage_LogAsset.eps}

\vspace{-6mm}
\caption{Strong convergence on the log-asset}
     \label{fig:strongLogAsset}
  \end{minipage} \hfill
  \begin{minipage}[c]{.46\linewidth}
\hspace{-3mm}\includegraphics[scale=0.38]{SansCouplage_Asset.eps}

\vspace{-6mm}
\caption{Strong convergence on the asset}
       \label{fig:strongAsset}
  \end{minipage}
\end{figure}

As expected, we see that the WeakTraj\_1 and the OU\_Improved schemes
exhibit a first order convergence rate whereas the other schemes exhibit a
$\frac{1}{2}$ order convergence rate. Note that the CMT scheme has a weak
trajectorial convergence of order one but it is much more difficult to
implement the coupling for which the convergence order is indeed equal to
one.\vspace{-2mm}

\begin{figure}[h]
  \begin{minipage}[c]{.46\linewidth}
\hspace{-6mm}\includegraphics[scale=0.38]{Couplage_LogAsset.eps}

\vspace{-6mm}
\caption{Weak trajectorial convergence on the log-asset
(with coupling)}
     \label{fig:strongLogAsset_Coupling}
  \end{minipage} \hfill
  \begin{minipage}[c]{.46\linewidth}
\hspace{-3mm}\includegraphics[scale=0.38]{Couplage_Asset.eps}

\vspace{-6mm}
\caption{Weak trajectorial convergence on the asset (with
coupling)}
       \label{fig:strongAsset_Coupling}
  \end{minipage}
\end{figure}
\begin{table}[h]
\begin{center}
\begin{tabular}{|c|c|c|c|c|c|c|}
\hline &WeakTraj\_1&Weak\_2&OU\_Improved&IJK&CMT&Euler\\\hline
Log-asset&-1.92 (0.03) & -0.91 (0.02)& -1.99 (0.06)& -0.95 (0.03)& -- & -0.85 (0.05)\\
\hline
Asset& -1.92 (0.04)&-0.95 (0.03)& -2 (0.05)& -0.91 (0.06)& -- &  -0.87 (0.09)\\
\hline
\end{tabular}
\end{center}
\vspace{-6mm}
\caption{\label{tab:slopes_Coupling} Slopes of the
regression lines (Weak trajectorial convergence)}
\end{table}

We repeat the same numerical experiments for the Stein \& Stein and the
quadratic Gaussian model. The results are reported in figures
\ref{fig:QGstrong} and \ref{fig:SSstrong} and in Table \ref{tab:slopes_Coupling_SSQG}. We observe that
the theoretical results stated in Proposition (\ref{SSqG}) are confirmed
by the numerical findings : the slope of the regression line is
approximately equal to $1.3$ for both Weak\_Traj1 and OU\_Improved schemes
in the Stein\&Stein model whereas for the quadratic Gaussian model, it is
approximately equal to 2. \vspace{-2mm}
\begin{figure}[h]
  \begin{minipage}[c]{.46\linewidth}
\hspace{-6mm}\includegraphics[scale=0.4]{QuadGaussian.eps}

\vspace{-6mm}
\caption{Quadratic Gaussian model - Weak trajectorial
convergence on the asset (with coupling)}
     \label{fig:QGstrong}
  \end{minipage} \hfill
  \begin{minipage}[c]{.46\linewidth}
\hspace{-3mm}\includegraphics[scale=0.4]{SteinStein.eps}

\vspace{-6mm}
\caption{Stein \& Stein model - Weak trajectorial
convergence on the asset (with coupling)}
       \label{fig:SSstrong}
  \end{minipage}
\end{figure}

\begin{table}[h]
\begin{center}
\begin{tabular}{|c|c|c|c|c|}
\hline &WeakTraj\_1&OU\_Improved&IJK&Euler\\
\hline
Quadratic Gaussian model - Asset& -1.95 (0.02)&-1.99 (0.01)& -0.94 (0.07)& -0.94 (0.01)\\
\hline
Stein\&Stein model - Asset& -1.3 (0.12)&-1.35 (0.07)& -0.89 (0.04)& -0.87 (0.06)\\
\hline
\end{tabular}
\end{center}
\vspace{-6mm}
\caption{\label{tab:slopes_Coupling_SSQG} Slopes of the
regression lines (Weak trajectorial convergence) - Quadratic Gaussian and
Stein\&Stein models}
\end{table}

\subsubsection{Convergence at terminal time}
We consider now convergence at terminal time, precisely the squared
$L^2$-norm of the difference between the terminal values of the
schemes with time steps $\frac{T}{N}$ and $\frac{T}{2N}$ :
\begin{equation}
\label{expreStrongT}
\ee\left[\left|\widehat{X}^N_T-\widehat{X}^{2N}_T\right|^2\right].
\end{equation}
Note that we introduce a coupling : we write the schemes straight at
the terminal time as we did for the Weak\_2 scheme (see
(\ref{Weak})) and we generate the terminal values of the schemes
with time steps $\frac{T}{N}$ and $\frac{T}{2N}$ using the same
single normal random variable to simulate the stochastic integral
w.r.t. $(B_t)_{t\in [0,T]}$. Once again, it is possible to proceed
alike for all the schemes but the CMT scheme. For the latter, we
simulate the scheme at all the intermediate discretization times to
obtain the value at terminal time.

We also consider the convergence at terminal time of the asset
itself. We report the numerical results in Figures
\ref{fig:strongTLogAsset} and \ref{fig:strongTAsset} and give the
slopes of the regression lines in Table \ref{tab:slopes_T}.

\begin{table}[!h]
\begin{center}
\begin{tabular}{|c|c|c|c|c|c|c|}
\hline &WeakTraj\_1&Weak\_2&OU\_Improved&IJK&CMT&Euler\\\hline
Log-asset& -2.03 (0.04)& -2 (0.05)& -2.97 (0.03)& -1.97 (0.02)& -1.05 (0.04)& -1.34 (0.19)\\
\hline
Asset& -2.02 (0.04)& -1.98 (0.04)&-2.97 (0.06)&-1.95 (0.03) & -1.08 (0.08)&-1.34 (0.18)\\
\hline
\end{tabular}
\end{center}
\vspace{-6mm}
\caption{\label{tab:slopes_T} Slopes of the regression lines
(Convergence at terminal time)}
\end{table}
\vspace{-2mm}

\begin{figure}
  \begin{minipage}[c]{.46\linewidth}
\hspace{-6mm}\includegraphics[scale=0.4]{3dStrongConvLogAsset.eps}

\vspace{-6mm}
     \caption{Convergence at terminal time for the log-asset}
     \label{fig:strongTLogAsset}
  \end{minipage} \hfill
  \begin{minipage}[c]{.46\linewidth}
\hspace{-3mm}\includegraphics[scale=0.4]{3dStrongConvAsset.eps}

\vspace{-6mm}
\caption{Convergence at terminal time for the asset}
       \label{fig:strongTAsset}
  \end{minipage}
\end{figure}

We observe that, as stated in Remark \ref{3demi}, the OU\_Improved
scheme exhibits a convergence rate of order $\frac{3}{2}$,
outperforming all the other schemes. As previously, the WeakTrak\_1
scheme exhibits a first order convergence rate. Note also that this
new coupling at terminal time improved the convergence rate of the
Weak\_2 and the IJK schemes up to order one and, surprisingly, it
improved the convergence rate of the Euler scheme up to an order
strictly greater than the expected $\frac{1}{2}$, approximately
$0.67$.

\subsection{Standard call pricing}
\subsubsection{Numerical illustration of weak convergence}
We compute the price of a call option with strike $K=100$ and
maturity $T=1$. For all the schemes but the CMT scheme, we use the
conditioning variance reduction technique presented in Remark
\ref{conditioning}.

In Figure \ref{fig:VitConvCall} we draw the logarithm of the pricing
error :
$\log\left(\left|P_{\text{exact}}-P^N_{\text{scheme}}\right|\right)$
where $P_{\text{exact}}\approx 12.82603$  is obtained by a
multilevel Monte Carlo with an accuracy of $5\times 10^{-4}$, as a function of
the logarithm of the number of time steps. In order to avoid
statistical noise, we make $10^7$ simulations.

We see that, as expected, the Weak\_2 scheme and the OU\_Improved
scheme exhibit a weak convergence of order two and converge much
faster than the others. The weak scheme already gives an accurate
price with only four time steps. The WeakTraj\_1 scheme has a weak
convergence of order one like the Euler and the IJK scheme, but it
has a greater leading error term. Fortunately, its better strong
convergence properties enable it to catch up with the multilevel
Monte Carlo method as we will see hereafter.

We also repeat this numerical experiment with the Stein\&Stein and
the quadratic Gaussian models (see figures \ref{fig:ConvCallSS} and
\ref{fig:ConvCallQG}) and check that the same conclusions hold.

Finally, note that the weak scheme does not require the simulation
of additional terms when compared to the Euler or the IJK schemes.
Combined with its second order weak convergence order, this makes
the Weak\_2 scheme very competitive for the pricing of plain vanilla
European options. In figure \ref{fig:ConvCall}, we give the relative
error of each scheme as a function of the computation time needed
when we fix the number of simulations to $M=100\,000$. We see that
both the Euler and the IJK scheme take five seconds to reach the
relative error obtained with the Weak\_2 scheme and the OU\_Improved
in less than a second. Note finally that the confidence interval is
much larger for the CMT scheme than for the other schemes because of
the use of the conditioning variance reduction technique for these
schemes.

\vspace{-2mm}

\begin{figure}
  \begin{minipage}[c]{.46\linewidth}
\hspace{-6mm}\includegraphics[scale=0.4]{VitConvCall.eps}

\vspace{-6mm}
\caption{Illustration of the convergence rate for the call
option}
       \label{fig:VitConvCall}
  \end{minipage} \hfill
  \begin{minipage}[c]{.46\linewidth}
\hspace{-3mm}\includegraphics[scale=0.4]{PricingCall.eps}

\vspace{-6mm}
     \caption{Convergence of the
call price with respect to time}
     \label{fig:ConvCall}
  \end{minipage}
\end{figure}

\vspace{-2mm}

\begin{figure}
  \begin{minipage}[c]{.46\linewidth}
\hspace{-6mm}\includegraphics[scale=0.4]{ConvCallSS.eps}

\vspace{-6mm}
\caption{Stein\&Stein model - Illustration of the
convergence rate for the call option}
       \label{fig:ConvCallSS}
  \end{minipage} \hfill
  \begin{minipage}[c]{.46\linewidth}
\hspace{-3mm}\includegraphics[scale=0.4]{ConvCallQG.eps}

\vspace{-6mm}
     \caption{Quadratic Gaussian model - Illustration of the convergence rate for the call option}
     \label{fig:ConvCallQG}
  \end{minipage}
\end{figure}

\subsubsection{Multilevel Monte Carlo}
Let us now apply the multilevel Monte Carlo method of Giles \cite{Giles1} to
compute the Call price. As previously, we consider the schemes straight at the
terminal time and use a conditioning variance reduction technique. We give the
CPU time as a function of the accuracy parameter Epsilon in Figure
\ref{fig:MultilevelCall}. This accuracy parameter is slightly higher than the
root mean square error achieved (see section 4.2 of \cite{Giles1} for details
on the heuristic numerical algorithm which is used). We check this numerically
by computing different ratios between the root mean square error achieved using
the reference value $P_{\text{exact}}$ and the target accuracy Epsilon (see
table \ref{tab:check}).

\begin{table}[!h]
\begin{center}
\begin{tabular}{|c|c|c|c|c|c|}
\hline &WeakTraj\_1&Weak\_2&OU\_Improved&IJK&Euler\\\hline
Epsilon=$10^{-1}$& 0.96 & 0.53& 0.6& 0.98& 0.61\\
\hline
Epsilon=$10^{-2}$& 0.8& 0.85&0.85&0.94&0.81\\
\hline
Epsilon=$10^{-3}$& 0.6&0.57&0.6&0.64 &0.7\\
\hline
Epsilon=$10^{-4}$& 0.8&0.98&0.54&0.6& 0.91\\
\hline
\end{tabular}
\end{center}
\vspace{-6mm} \caption{\label{tab:check} Ratio between rmse and Epsilon}
\end{table}

Figure \ref{fig:MultilevelCall} shows that both the Weak\_2 and the
OU\_Improved scheme are great time-savers. For the OU\_Improved scheme, the
effect coming from its good strong convergence properties is somewhat offset by
the additional terms that it requires to simulate. We can see nevertheless that it
is going  to overcome the Weak\_2 scheme for higher accuracy levels.

In order to illustrate the benefits of the multilevel Monte Carlo method, we
also give the variation of the computational complexity $C$, defined as the total number of timesteps performed on all levels (see section 5 of
\cite{Giles1}), with the desired accuracy with and without multilevel for the
OU\_Improved scheme (see figure \ref{fig:MultilevelCall_OU}).

\subsection{Lookback option pricing and multilevel Monte Carlo}
Finally, we consider an example of path-dependent option pricing : the lookback
option. More precisely, we compute the price of the option whose pay-off is
equal to $S_T-\min_{t\in[0,T]}S_t$. The use of multilevel Monte Carlo for
lookback options in local volatility models discretized by the Euler scheme was justified in \cite{GilesHighamMao}.

In order to take full advantage of the good convergence properties of our
schemes, we approximate the minimum of the scheme by the minimum of a drifted
Brownian motion. This is similar to what is done in \cite{Giles2}.

%The additional difficulty in our case is that we have already
%imposed a certain coupling between the Brownian increments for the
%scheme with timestep $\delta_N$ and the one with timestep
%$\delta_{2N}$ as explained in Remark \ref{MLMC}.
\newpage
\begin{figure}[!ht]
\begin{center}
\leavevmode \epsfig{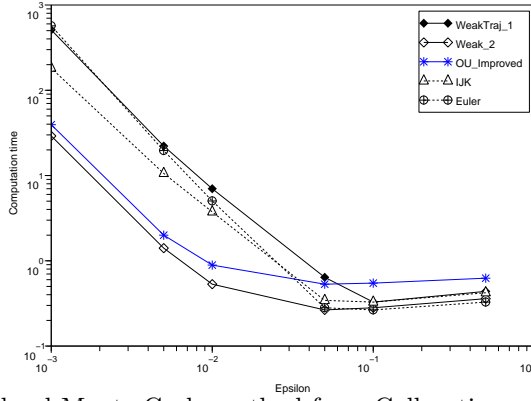}

\vspace{-6mm} \caption{Multilevel Monte Carlo method for a Call option using
different schemes} \label{fig:MultilevelCall}
\end{center}
\end{figure}

\begin{figure}[!ht]
\begin{center}
\leavevmode \epsfig{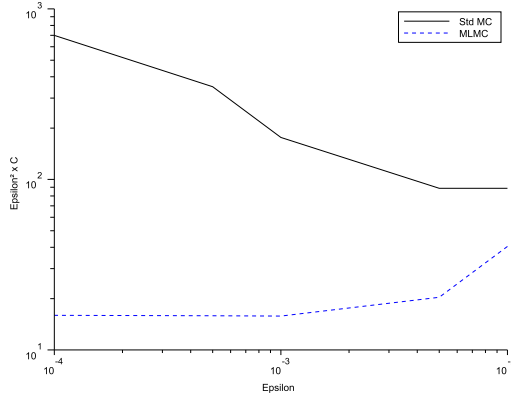}

\vspace{-6mm} \caption{OU\_Improved scheme with and without multilevel Monte
Carlo method} \label{fig:MultilevelCall_OU}
\end{center}
\end{figure}

More precisely, for the WeakTraj\_1 scheme, consider the interval
$[k\frac{T}{N},(k+1)\frac{T}{N}]$.\\[2mm]
\underline{Scheme with time step $\delta_{2N}$} :\\
We approximate $\min_{t\in[k\frac{T}{N},(k+1)\frac{T}{N}]}S_t$ by
$\widetilde{m}^{2N}_{2k}\wedge \widetilde{m}^{2N}_{2k+1}$ where,
$\forall 0\leq j\leq 2N-1$,
\[\widetilde{m}^{2N}_j=\frac{1}{2}\left(e^{\widetilde{X}^{2N}_{j\frac{T}{2N}}}+S^{e,2N}_{(j+1)\frac{T}{2N}}-\sqrt{\left(
e^{\widetilde{X}^{2N}_{j\frac{T}{2N}}}-S^{e,2N}_{(j+1)\frac{T}{2N}}\right)^2-2
e^{2\widetilde{X}^{2N}_{j\frac{T}{2N}}}f^2(Y_{j\frac{T}{2N}})\frac{T}{2N}\ln(U_{j})}\right),\]
where
$S^{e,2N}_{(j+1)\frac{T}{2N}}=e^{\widetilde{X}^{2N}_{j\frac{T}{2N}}}\left(1+r\frac{T}{2N}+f(Y_{j\frac{T}{2N}})\left(
\rho(W_{(j+1)\frac{T}{2N}}-W_{j\frac{T}{2N}})+\sqrt{1-\rho^2}(B_{(j+1)\frac{T}{2N}}-B_{j\frac{T}{2N}})\right)\right)$
and $(U_{j})_{0\leq j \leq 2N-1}$ is an independent sequence of
independent random variable uniformly distributed.\\[2mm]
\pagebreak
\begin{figure}[!ht]
\begin{center}
\leavevmode \epsfig{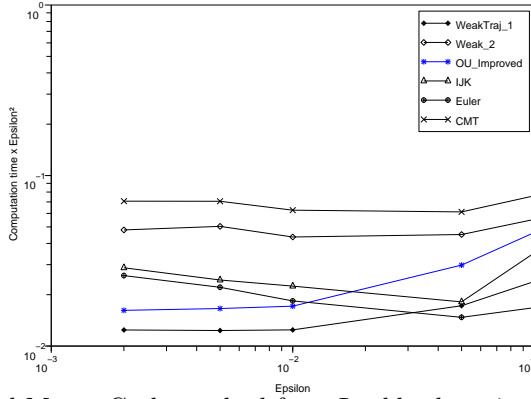}

\vspace{-6mm} \caption{Multilevel Monte Carlo method for a Lookback option
using different schemes.} \label{fig:MultilevelLook}
\end{center}
\end{figure}
$\,$\\
\underline{Scheme with time step $\delta_{N}$} :\\
According to Remark \ref{MLMC},
$\widetilde{\widetilde{X}}^N_{t_{k+1}}$ is computed using the
Brownian increment $\Delta \widetilde{B}^N_{k+1}$ given by a linear
combination of
$\left(B_{(2k+1)\frac{T}{2N}}-B_{k\frac{T}{N}},B_{(k+1)\frac{T}{N}}-B_{(2k+1)\frac{T}{2N}}\right)$
(see (\ref{couplage})). Now, to prevent bias, we are going to
approximate $\min_{t\in[k\frac{T}{N},(k+1)\frac{T}{N}]}S_t$ by the
minimum $\min_{[t_k,t_{k+1}]}\widetilde{S}^{e,2N}_t$ of some Euler
scheme $\widetilde{S}^{e,2N}_t$ like in the scheme with time step
$\delta_{2N}$. To remain consistent, we have to choose
\[\widetilde{S}^{e,2N}_{t_{k+1}}=e^{\widetilde{\widetilde{X}}^N_{t_k}}\left(1+r\frac{T}{N}+f(Y_{k\frac{T}{N}})\left(\rho(W_{t_{k+1}}-W_{t_k})+\sqrt{1-\rho^2}
\Delta \widetilde{B}^N_{k+1}\right)\right).\]

In order to ensure a good strong coupling with the scheme with time
step $\delta_{2N}$, we need to compute the intermediate value
$\widetilde{S}^{e,2N}_{(2k+1)\frac{T}{2N}}=e^{\widetilde{\widetilde{X}}^N_{t_k}}\left(1+r\frac{T}{N}+f(Y_{k\frac{T}{N}})\left(\rho(W_{t_{k+1}}-W_{t_k})+\sqrt{1-\rho^2}
\Delta \widetilde{\widetilde{B}}^{2N}_{2k+1}\right)\right)$ using
some Brownian increment $\Delta
\widetilde{\widetilde{B}}^{2N}_{2k+1}$ as close as possible to
$B_{(2k+1)\frac{T}{2N}}-B_{k\frac{T}{N}}$ but such that $\Delta
\widetilde{B}^N_{k+1}-\Delta \widetilde{\widetilde{B}}^{2N}_{2k+1}$
is independent of $\Delta \widetilde{\widetilde{B}}^{2N}_{2k+1}$ and
distributed according to $\mathcal{N}(0,\frac{T}{2N})$. Choosing
$\Delta \widetilde{\widetilde{B}}^{2N}_{2k+1}$ of the form
$\frac{a\left(B_{(2k+1)\frac{T}{2N}}-B_{k\frac{T}{N}}\right)+b\left(B_{(k+1)\frac{T}{N}}-B_{(2k+1)\frac{T}{2N}}\right)}{\sqrt{a^2+b^2}}$
and maximizing $Cov\left(\Delta
\widetilde{\widetilde{B}}^{2N}_{2k+1},B_{(2k+1)\frac{T}{2N}}-B_{k\frac{T}{N}}\right)=\frac{a}{\sqrt{a^2+b^2}}$
leads to $a=v^{2N}_{2k}+v^{2N}_{2k+1}$ and
$b=v^{2N}_{2k+1}-v^{2N}_{2k}$ (see Remark \ref{MLMC} for the
definition of $v^{2N}_{.}$).

Finally, we approximate
$\min_{t\in[k\frac{T}{N},(k+1)\frac{T}{N}]}S_t$ by
$\widetilde{m}^{N}_{k}\wedge \widetilde{m}^{N}_{k+1}$ where
\[\widetilde{m}^{N}_{k}=\frac{1}{2}\left(e^{\widetilde{\widetilde{X}}^{N}_{k\frac{T}{N}}}+\widetilde{S}^{e,2N}_{(2k+1)\frac{T}{2N}}-\sqrt{\left(
e^{\widetilde{\widetilde{X}}^{N}_{k\frac{T}{N}}}-\widetilde{S}^{e,2N}_{(2k+1)\frac{T}{2N}}\right)^2-2
e^{2\widetilde{\widetilde{X}}^{N}_{k\frac{T}{N}}}f^2(Y_{k\frac{T}{N}})\frac{T}{2N}\ln(U_{2k})}\right)\]
and
\[\widetilde{m}^{N}_{k+1}=\frac{1}{2}\left(\widetilde{S}^{e,2N}_{(2k+1)\frac{T}{2N}}+\widetilde{S}^{e,2N}_{(k+1)\frac{T}{N}}-\sqrt{\left(
\widetilde{S}^{e,2N}_{(2k+1)\frac{T}{2N}}-\widetilde{S}^{e,2N}_{(k+1)\frac{T}{N}}\right)^2-2
e^{2\widetilde{\widetilde{X}}^{N}_{k\frac{T}{N}}}f^2(Y_{k\frac{T}{N}})\frac{T}{2N}\ln(U_{2k+1})}\right).\]

The numerical results we obtain are very satisfactory. In figure
\ref{fig:MultilevelLook}, we draw the CPU time multiplied by the mean
square error against the root mean square error. We see that our schemes
perform much better than the others. \vspace{-2mm}

\section{Conclusion}
In this article, we have capitalized on the particular structure of
stochastic volatility models to propose and discuss two simple and
yet competitive discretization schemes. The first one exhibits first
order weak trajectorial convergence and has the advantage of
improving multilevel Monte Carlo methods for the pricing of path
dependent options. The second one is rather useful for pricing
European options since it has a second order weak convergence rate.

We have also focused on the special case of an Ornstein-Uhlenbeck process
driving the volatility, which encompasses many stochastic volatility models
such as the Scott's model \cite{Scott} or the quadratic Gaussian model. Then,
the convergence properties of the previous schemes are preserved when
simulating $(Y_t)_{0\leq t\leq T}$ exactly. We have also proposed an improved
scheme exhibiting both weak trajectorial convergence of order one and weak
convergence of order two.

Our numerical experiments confirm the theoretical rates of
convergence of our schemes. We also compare the time needed by the
different schemes to achieve a given precision in the multilevel
Monte Carlo computation of a plain vanilla Call option and a
lookback option. For high levels of precision our schemes turn out
to be more efficient than the Euler, the Kahl-Jäckel and the
Cruzeiro-Malliavin-Thalmaier schemes for both the vanilla Call and
the lookback option. The reason is that their better convergence
properties compensate the increase of computation effort at each
step.

As a last remark, we point out that our results can be naturally
extended to stochastic volatility models where the constant correlation
coefficient is replaced by a function $\rho(Y_t)$ of the process driving the stochastic
volatility in \eqref{SV1}. In this case, if one considers the transformation
$F(y)=\int_.^y \frac{\rho(z)f(z)}{\sigma(z)}dz$ and carries out the
same analysis then one should obtain weak trajectorial convergence
results under additional regularity assumptions on the function
$\rho$.

\bibliographystyle{plain}
\addcontentsline{toc}{section}{Bibliography}
%\bibliography{Mybib}

\section{Appendix}
\subsection{Proof of Lemma \ref{MilsteinStrong}}

We first suppose that $p=1$. According to Theorem 5.2 page 72 of Milstein
\cite{Milstein}, it suffices to check that there exists a positive constant $C$
independent of $N$ such that
\begin{equation}\label{HypMil}
\begin{array}{rcl}
\ds
\left|\ee\left(Y_{\delta_N}-\overline{Y}^N_{\delta_N}\right)\right|&\leq&
\ds C \delta_N^2\\[3mm]
\ds
\left|\ee\left(\left(Y_{\delta_N}-\overline{Y}^N_{\delta_N}\right)^2\right)\right|^{\frac{1}{2}}&\leq&
\ds C \delta_N^{\frac{3}{2}}\\[3mm]
\ds
\left|\ee\left(\left(Y_{\delta_N}-\overline{Y}^N_{\delta_N}\right)^4\right)\right|^{\frac{1}{4}}&\leq&
\ds C \delta_N^{\frac{5}{4}}.
\end{array}
\end{equation}First note that
\[Y_{\delta_N}-\overline{Y}^N_{\delta_N}=\int_0^{\delta_N}
b(Y_s)-b(y_0)ds+\int_0^{\delta_N}\left(\int_0^s(b\sigma'+\frac{1}{2}\sigma^2
\sigma'')(Y_r)dr+(\sigma\sigma'(Y_r)-\sigma\sigma'(y_0))dW_r\right)dW_s.\]
Thanks to Itô's formula and to assumption ($\mathcal{H}$\ref{Mil1}),
we have that
\[\begin{array}{rcl}
\ds
\left|\ee\left(Y_{\delta_N}-\overline{Y}^N_{\delta_N}\right)\right|&=&\ds
\left|\int_0^{\delta_N}\int_0^s\ee\left((bb'+\frac{1}{2}b''\sigma^2)(Y_r)\right)dr
ds\right|\\[3mm]
&\leq&\ds C \left|\int_0^{\delta_N}\int_0^s C (1+\ee(|Y_r|^2))drds\right|\\[4mm]
&\leq&C \delta_N^2.
\end{array}\]
Using assumptions ($\mathcal{H}$\ref{Mil1}) and
($\mathcal{H}$\ref{Mil2}), we also have $\forall p\geq 1$
\[\begin{array}{rcl}
\ds
\ee\left(\left|Y_{\delta_N}-\overline{Y}^N_{\delta_N}\right|^{2p}\right)
&\leq&\ds 2^{2p-1}
\ee\left[\left|\int_0^{\delta_N}b(Y_s)-b(y_0)ds\right|^{2p}\right.\\[3mm]
&&\ds
\quad\quad\left.+\left|\int_0^{\delta_N}\left(\int_0^s(b\sigma'+\frac{1}{2}\sigma
\sigma'')(Y_r)dr+(\sigma\sigma'(Y_r)-\sigma\sigma'(y_0))dW_r\right)dW_s\right|^{2p}\right]\\[3mm]
&\leq&\ds 2^{2p-1}
\left[\delta_N^{2p-1} \int_0^{\delta_N}\ee\left(|b(Y_s)-b(y_0)|^{2p}\right)ds\right.\\[3mm]
&&\ds
\left.+C\delta_N^{p-1}\int_0^{\delta_N}\ee\left(\left|\int_0^s(b\sigma'+\frac{1}{2}\sigma
\sigma'')(Y_r)dr+(\sigma\sigma'(Y_r)-\sigma\sigma'(y_0))dW_r\right|^{2p}\right)ds\right]\\[3mm]
&\leq&\ds C \left[\delta_N^{2p-1}
\int_0^{\delta_N}s^pds+\delta_N^{p-1}\int_0^{\delta_N}
s^{2p-1}\int_0^s\ee\left(\left|(b\sigma'+\frac{1}{2}\sigma
\sigma'')(Y_r)\right|^{2p}\right)dr\,ds\right.\\[3mm]
&&\ds \quad\quad\left.+
\delta_N^{p-1}\int_0^{\delta_N}s^{p-1}\int_0^s
\ee\left(\left|\sigma\sigma'(Y_r)-\sigma\sigma'(y_0)\right|^{2p}\right)
dr\,ds\right]\\[3mm]
&\leq&C \delta_N^{3p}.\end{array}\] This implies both the second and the third
inequality of (\ref{HypMil}). This estimation is also sufficient to extend the
result of Milstein \cite{Milstein} to the $L^{2p}$ norm and conclude the proof.

\subsection{Proof of Lemma \ref{MomentExpo}}

One can easily check that $(Y_t)_{0\leq t\leq T}$ is a Gaussian
process which has the same distribution law as the process
$(y_0e^{-\kappa t}+\theta(1-e^{-\kappa t})+\frac{\nu e^{-\kappa
t}}{\sqrt{2\kappa}} W_{e^{2\kappa t}-1})_{0\leq t\leq T}$. So,

\[\begin{array}{rcl}
\ds \ee\left(e^{c_1 \sup_{0\leq t \leq T}
|Y_t|^{1+c_2}}\right)&=&\ds \ee\left(e^{c_1 \sup_{0\leq t \leq T}
|y_0e^{-\kappa t}+\theta(1-e^{-\kappa t})+\frac{\nu e^{-\kappa
t}}{\sqrt{2\kappa}} W_{e^{2\kappa
t}-1}|^{1+c_2}}\right)\\[3mm]
&\leq&\ds C \ee\left(e^{C\sup_{0\leq t\leq T}|W_{e^{2\kappa
t}-1}|^{1+c_2}}\right).
\end{array}\]
Since $ \sup_{0\leq t\leq e^{2\kappa T}-1 }|W_t| =\left(\sup_{0\leq
t\leq e^{2\kappa T}-1}W_t\right) \vee \left(-\inf_{0\leq t\leq
e^{2\kappa T}-1 }W_t\right)$, we deduce from the symmetry property
of the Brownian motion that
\[\begin{array}{rcl}
\ds \ee\left(e^{c_1 \sup_{0\leq t \leq T}
|Y_t|^{1+c_2}}\right)&\leq&\ds  C \ee\left(e^{C |\sup_{0\leq t\leq
e^{2\kappa T}-1}W_t|^{1+c_2}}+e^{C |\inf_{0\leq t\leq e^{2\kappa
T}-1 }W_t|^{1+c_2}}\right)\\[2mm]
&\leq &\ds 2C \ee\left(e^{C |\sup_{0\leq t \leq e^{2\kappa T}-1}
W_t|^{1+c_2}}\right). \end{array}\] The probability density function of
$\sup_{0\leq t \leq T} W_t$ is equal to $y\mapsto \sqrt{\frac{2}{\pi T}}
e^{-\frac{y^2}{2T}} \mathbb{1}_{\{y >0\}}$ (see for example problem 8.2 p. 96
of Karatzas and Shreve \cite{KaratzasShreve}) which permits to conclude.

Let us now assume that $y_0\neq 0$. Then $t_0\stackrel{\rm
def}{=}\inf\{t\in[0,T]:y_0e^{-\kappa t}+\theta(1-e^{-\kappa t})=0\}$
(convention $\inf\emptyset=T$) is positive. $\pp[|Y_0|\leq
N^{-\alpha}]=0$ for $N$ large enough and for $t\in (0,T]$,
\begin{align*}
\pp[|Y_t|\leq N^{-\alpha}]\leq \frac{2N^{-\alpha}}{\nu}
\sqrt{\frac{\kappa}{\pi(1-e^{-2\kappa t})}}\exp\left(-\frac{\kappa(|y_0e^{-\kappa t}+\theta(1-e^{-\kappa t})|-N^{-\alpha})^2}{\nu^2(1-e^{-2\kappa t})}\right).
\end{align*}
Since $\lim_{N\to\infty}\inf_{t\in[0,\frac{t_0}{2}]}(|y_0e^{-\kappa
t}+\theta(1-e^{-\kappa t})|-N^{-\alpha})>0$, one deduces that
$\sup_{t\in[0,\frac{t_0}{2}]}\pp[|Y_t|\leq N^{-\alpha}]\leq C
N^{-\alpha}$. The same conclusion holds for $\sup_{t\in
[\frac{t_0}{2},T]}\pp[|Y_t|\leq N^{-\alpha}]$ by bounding the
exponential factor by $1$.
\end{document}